\newcommand{\N}{\mathbb N}
\newcommand{\R}{\mathbb R}
\newcommand{\mc}{\mathcal}
\newcommand{\id}{\operatorname{id}}
\newcommand{\inj}{\operatorname{inj}}
\newcommand{\grad}{\operatorname{grad}}
\newcommand{\divgc}{\operatorname{div}}
\newcommand{\Ric}{\operatorname{Ric}}
\newcommand{\diam}{\operatorname{diam}}
\newcommand{\Diff}{\operatorname{Diff}}
\newcommand{\vol}{\operatorname{vol}}
\newcommand{\tr}{\operatorname{tr}}
\newcommand{\Vol}{\operatorname{Vol}}
\newcommand{\CI}{\mc{C}^{\infty}}
\newcommand{\sym}{\operatorname{sym}}
\newcommand\halfopen[2]{\ensuremath{[#1,#2)}}
\newcommand\halfclosed[2]{\ensuremath{(#1,#2]}}
\newtheoremstyle{break}
  {}
  {}
  {\itshape}
  {}
  {\bfseries}
  {.}
  {\newline}
  {}
\theoremstyle{break}
\newtheorem{thm}{Theorem}[section]
\newtheorem{prop}{Proposition}[section]
\newtheorem{lemma}{Lemma}[section]
\newtheorem*{ithm}{Theorem}
\theoremstyle{definition}
\newtheorem{defn}{Definition}[section]
\begin{document}
\title{Blow up criteria for geometric flows on surfaces}
\author{Lothar Schiemanowski}
\address{Christian-Albrechts-Universität zu Kiel, Mathematisches Seminar, Ludewig-Meyn-Straße 4, Kiel}
\email{schiemanowski@math.uni-kiel.de}
\begin{abstract}
Utilizing a splitting of geometric flows on surfaces introduced by Buzano and Rupflin, we present a general scheme to prove blow up criteria for such geometric flows. A vital ingredient is a new compactness theorem for families of metrics on surfaces with a uniform bound on their volumes, square integrals of their curvatures and injectivity radii.  In particular we prove blow up criteria for the harmonic Ricci flow and for the spinor flow on surfaces. 
\end{abstract}
\maketitle

\section{Introduction}
Let $M$ be a closed orientable surface with non-positive Euler characteristic $\chi(M)$. Given an initial metric $g$ on $M$ and perhaps some other piece of data $s$ (for example a map, a connection, a section of a vector bundle, ...), a geometric flow associates to $(g, s)$ a family $(g_t, s_t)$, such that $(g_0, s_0) = (g,s)$. Typically, the geometric flow is given as a solution of a nonlinear partial differential equation. The most basic result to be established for geometric flows is short time existence and uniqueness: for a class of admissible initial data $(g,s)$ there exists a unique maximal solution $(g_t, s_t)$ with $(g_0, s_0) = (g,s)$ on an interval $\halfopen{0}{T_{\max}}$. If $T_{\max} < \infty$, the flow becomes singular in some way. It is of great interest to understand what happens to the metric and the data as we approach the singular time $T_{\max}$. The goal of this article is to present a fairly general way of obtaining conditions on $(g_t, s_t)$ on an interval $\halfopen{0}{T}$, which exclude the formation of singularities at time $T$. The strategy for this relies on a detailed understanding of the space of metrics on closed surfaces.

The uniformization theorem says that for any metric $g$ on $M$ there exists a unique metric $\bar g$ of constant curvature with volume $1$, which is conformal to $g$, i.e. which satisfies
$$\bar g = e^{2u} g$$
for some $u \in \CI(M)$. Introducing the space of Riemannian metrics on $M$
$$\mc{M} = \{g \text{ Riemannian metric on } M\}$$
and its subspace of constant curvature metrics (of unit volume)
$$\mc{M}^{cc} = \{g \text{ Riemannian metric with constant curvature and } \Vol(M,g) = 1\},$$
the uniformization theorem can then be reformulated as the statement that
$$\CI(M) \times \mc{M}^{cc} \to \mc{M}$$
$$(u, \bar g) \mapsto e^{2u} \bar g$$
is a bijection. The group of diffeomorphisms $\Diff(M)$ acts on $\mc{M}$ and also on $\mc{M}^{cc}$ by pullback. The orbits of $\Diff(M)$ in $\mc{M}$ correspond to isometry classes of Riemannian metrics on $M$. It turns out that the quotient $\mc{S} = \mc{M}^{cc} / \Diff(M)$, i.e. the set of isometry classes of constant curvature metrics on $M$, is in a natural way a finite dimensional orbifold.

Now suppose $g_t$ is a smooth family of metrics parametrized by an interval $(0,T)$. By the uniformization theorem there exists a family of conformal factors $u_t$ and a family of constant curvature metrics $\bar g_t \in \mc{M}^{cc}$, such that
$$g_t = e^{2u_t} \bar g_t.$$
To formulate useful blow up criteria, we thus need to understand under which geometric conditions the conformal factors and the constant curvature factors remain controlled in a useful way. Since the constant curvature condition is diffeomorphism invariant, a purely geometric (i.e. isometry invariant condition) can not suffice to control the constant curvature metrics $\bar g_t$. This issue can be overcome by changing the family $\bar g_t$ by a family of diffeomorphisms. The following theorem is the foundation for the blow up criteria we prove later.
\begin{ithm}
  \label{GeomCtrl}
  Suppose $M$ is a closed surface with $\chi(M) \leq 0$ and let $\hat g$ be any Riemannian metric on $M$. Suppose $g_t \in \mc{M}$ is a smooth family defined on an interval $(0,T)$ and suppose
  $$\sup_{0 < t < T} \Vol(M, g_t) < \infty,$$
  $$\sup_{0 < t < T} \int_M |R_{g_t}|^2 \vol_{g_t} < \infty$$
  and
  $$\inf_{0 < t < T} \inj(M, g_t) > 0.$$
  Then there exist a family of diffeomorphisms $f_t$, a family of constant curvature metrics $\bar g_t$ and a family of conformal factors $u_t \in \CI(M)$ 
  $$g_t = f_t^*(e^{2u_t} \bar g_t)$$
  and
  $$\sup_{0 < t < T} \|u_t\|_{H^2(M, \hat g)} < \infty$$
  and
  $$\inf_{0 < t < T} \inj(M,g_t) > 0.$$
\end{ithm}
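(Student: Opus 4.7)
The strategy is to first apply the uniformization theorem pointwise in $t$ to obtain a preliminary decomposition $g_t = e^{2v_t} h_t$ with $h_t \in \mc{M}^{cc}$ of unit volume, and then to separately gain control of (i) the constant curvature factor modulo the action of $\Diff(M)$, and (ii) the conformal factor after the appropriate diffeomorphism adjustment has been made. The smoothness in $t$ of the decomposition is automatic from the smoothness of $g_t$ and the implicit function theorem applied to the uniformization map.

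\textbf{The moduli component.} The first task is to prove that the class $[h_t]\in\mc{S}=\mc{M}^{cc}/\Diff(M)$ remains in a compact subset of the (orbifold) moduli space. By Mumford compactness (or the trivial identification of the torus moduli space when $\chi(M)=0$), this amounts to bounding the systole of $h_t$ from below. I would show that the hypotheses on $g_t$ force a systole bound on $h_t$: a short closed geodesic on $(M,h_t)$ would, after accounting for the conformal factor, produce either a very short closed geodesic on $(M,g_t)$ (contradicting the injectivity radius bound) or a concentration of area that, combined with the $L^2$ curvature bound and a Gauss--Bonnet/isoperimetric argument in the spirit of Chen's work~\cite{Chen1998} (cf.\ also~\cite{Croke1980}), is incompatible with the uniform volume bound. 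Once $[h_t]$ is confined to a compact $\mc{K}\subset\mc{S}$, the horizontal curve machinery of Rupflin--Topping~\cite{Rupflin2016} yields a smooth family of diffeomorphisms $f_t$ such that $\bar g_t := f_t^* h_t$ is a horizontal curve in $\mc{M}^{cc}$ lying in a compact subset thereof; in particular the metrics $\bar g_t$ enjoy a uniform $C^k$ bound with respect to the fixed background $\hat g$ for every $k$.

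\textbf{The conformal factor.} Setting $u_t := v_t \circ f_t$, one has $g_t = f_t^*(e^{2u_t}\bar g_t)$ and
\begin{equation*}
  -\Delta_{\bar g_t} u_t + K_{\bar g_t} = K_{g_t}\, e^{2u_t},
\end{equation*}
where $K_{\bar g_t}$ is constant. Integrating the equation yields $\|\Delta_{\bar g_t} u_t\|_{L^1}\le C$ from Gauss--Bonnet and the $L^2$ curvature bound together with the volume bound. The volume bound $\int e^{2u_t}d\vol_{\bar g_t}\le C$ rules out $u_t$ being uniformly large, while the injectivity radius bound on $g_t$ together with the bounded geometry of $\bar g_t$ rules out $u_t$ being very negative (for if $u_t(x)$ were very negative the geodesic ball of $g_t$-radius $\sim e^{u_t(x)}\cdot c$ at $x$ would already be long in $\bar g_t$, letting curvature concentrate in a way incompatible with $L^2$-integrability). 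Together with Moser--Trudinger on the compact family $\{\bar g_t\}$ this should give an $L^\infty$ bound on $u_t$. Bootstrapping: once $u_t$ is bounded in $L^\infty$, the equation yields $\Delta_{\bar g_t} u_t$ bounded in $L^2$ (using $\int R_{g_t}^2 e^{2u_t}\,d\vol_{\bar g_t} = \int R_{g_t}^2\,d\vol_{g_t}\le C$), and uniform elliptic regularity on the compact family $\{\bar g_t\}$ upgrades this to a uniform $H^2(M,\bar g_t)$ bound, which is equivalent to a uniform $H^2(M,\hat g)$ bound by the equivalence of Sobolev norms across the compact family.

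\textbf{Main obstacle.} The heart of the argument is the quantitative linking of the three geometric hypotheses on $g_t$ to the systole bound on $\bar g_t$ and simultaneously to the $L^\infty$ bound on $u_t$; neither statement is easy in isolation because both depend on knowing that one is not in the ``thin part'' of moduli space. I expect the cleanest approach is to argue by contradiction and extract a degenerating subsequence $t_n\to t_\infty$ for which \emph{both} the systole of $\bar g_{t_n}$ shrinks to zero \emph{and} $\|u_{t_n}\|_{L^\infty}\to\infty$, and then to rule out such collapse using the integral curvature and injectivity radius bounds via an $\varepsilon$-regularity-style argument on small collars, in the spirit of Buzano--Rupflin~\cite{Buzano2017}. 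The compactness theorem alluded to in the abstract should provide the abstract framework in which this simultaneous degeneration can be excluded.
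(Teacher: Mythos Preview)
Your overall architecture (uniformize, control the moduli part, control the conformal factor, bootstrap to $H^2$) matches the paper's, but the decisive step---the uniform $C^0$ bound on the conformal factor---is handled very differently, and this is exactly where your proposal has a gap.

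You correctly diagnose the circularity: bounding the systole of $h_t$ seems to require knowing something about $v_t$, while bounding $v_t$ in $L^\infty$ seems to require knowing the geometry of $h_t$ is controlled. Your suggested resolution (contradiction, degenerating subsequence, $\varepsilon$-regularity on collars in the spirit of Chen) is plausible in outline but not carried out, and you yourself defer it to ``the compactness theorem alluded to in the abstract''. The paper breaks the circularity in a completely different and rather clean way: it proves an \emph{a priori} estimate (Theorem~\ref{APE}) stating that $\|u\|_{C^0}$ is bounded purely in terms of $\inj(M,g)$, $\Vol(M,g)$, and $\int |R_g|^2$, with \emph{no} reference to the constant curvature metric $\bar g$. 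The proof runs the normalized Ricci flow from $g$: Yang's $L^p$-smoothing theorem \cite{Yang1992} produces, after a short time $T$, a conformal metric $g_T$ with pointwise curvature bound and controlled Sobolev constant; the Calder\'on--Zygmund inequality then bounds the curvature potential of $g_T$; and Hamilton's long-time estimates \cite{Hamilton1988} control $g_t$ for all $t\ge T$ up to the limit $g_\infty = \bar g$. Since the Ricci flow stays in the conformal class, this gives $\|u\|_{C^0}$ directly.

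Once the $C^0$ bound on $u_t$ is in hand \emph{first}, the rest is straightforward and follows your outline: the diameter of $g_t$ is bounded (via a packing argument using the volume and injectivity radius bounds), hence the diameter of $\bar g_t$ is bounded, hence $\inj(\bar g_t)$ is bounded below; then the curvature equation $\Delta_{\bar g_t}u_t = R_{g_t}e^{2u_t}-R_{\bar g_t}$ and Calder\'on--Zygmund give the $H^2$ bound. So the paper's order of operations is the reverse of yours: conformal factor first (via a parabolic argument), moduli second---and this dissolves precisely the obstacle you flagged. Your Moser--Trudinger\,/\,Chen route may well be workable, but it is genuinely harder and you have not supplied the argument.
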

Thus, assuming uniform control on the volumes, the $L^2$ norms of the curvatures and on the injectivity radius, we obtain uniform control on the conformal factors in $H^2$ and on the constant curvature metrics in any $C^k$ norm - provided we pull back the metrics by an adequate family of diffeomorphisms.

The theorem suggests that a natural assumption to rule out blow up along a geometric flow is that the three quantities in the theorem remain bounded. This assumption may not be sufficient, since the extra piece of data $A_t$ may blow up independently from the geometry. For many geometric flows the coupled data satisfies a partial differential equation which has been studied independently on a fixed Riemannian manifold. The control on the geometry given by theorem \ref{GeomCtrl} is typically enough to apply these results with minimal modifications.

The above strategy will be applied in two cases: the harmonic Ricci flow and the spinor flow. We will recall the precise definitions of these flows in section \ref{blowup}. In the following theorems we assume that $M$ is a closed surface with $\chi(M) \leq 0$.
\begin{ithm}
  Suppose $(g_t, f_t)$ is a solution of the harmonic Ricci flow on $M \times \halfopen{0}{T}$ with
  $$\sup_{0 \leq t < T} \int_M |R_{g_t}|^{2+\epsilon} + |df_t|^{4+2\epsilon} \vol_{g_t} < \infty$$
  for some $\epsilon > 0$ and
  $$\inf_{0 \leq t < T} \inj(M, g_t) > 0.$$
  Then the solution can be extended smoothly to a larger interval $\halfopen{0}{\tilde T}$, $\tilde T > T$.
\end{ithm}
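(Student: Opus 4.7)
I would first verify the hypotheses of Theorem \ref{GeomCtrl} on $\halfopen{0}{T}$. The $L^{2+\epsilon}$ bound on $R_{g_t}$ together with Hölder's inequality reduces the required $L^2$ bound on the curvature to a uniform bound on $\Vol(M,g_t)$. This volume bound follows in turn from the harmonic Ricci flow evolution $\partial_t g_t = -R_{g_t}\, g_t + 2\alpha\,df_t\otimes df_t$ together with the Gauss-Bonnet theorem, which give
\[
  \frac{d}{dt}\Vol(M,g_t) = -4\pi\chi(M) + \alpha \int_M |df_t|^2\, \vol_{g_t}.
\]
Hölder's inequality bounds the right hand integral by $\bigl(\int_M |df_t|^{4+2\epsilon}\vol_{g_t}\bigr)^{2/(4+2\epsilon)}\Vol(M,g_t)^{(2+2\epsilon)/(4+2\epsilon)}$; since the exponent on the volume is strictly below one, Grönwall's lemma keeps $\Vol(M,g_t)$ bounded on $\halfopen{0}{T}$.

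Next I would invoke Theorem \ref{GeomCtrl} to obtain diffeomorphisms $\phi_t$, constant curvature metrics $\bar g_t$ and conformal factors $u_t$ with $g_t = \phi_t^*(e^{2u_t}\bar g_t)$, $\sup_t \|u_t\|_{H^2} < \infty$, and $\bar g_t$ precompact modulo diffeomorphism in $\mc{M}^{cc}$. Setting $\tilde g_t := e^{2u_t}\bar g_t$ and $\tilde f_t := f_t\circ \phi_t^{-1}$, the pair $(\tilde g_t, \tilde f_t)$ satisfies the harmonic Ricci flow modified by a Lie derivative term coming from the time-dependence of $\phi_t$. Because every hypothesis on $(g_t, f_t)$ is isometry invariant, the same $L^{4+2\epsilon}$ bound on $|d\tilde f_t|_{\tilde g_t}$ holds; the Sobolev embedding $H^2 \hookrightarrow C^{0,\alpha}$ on a surface then yields uniform $C^{0,\alpha}$ bounds on $e^{2u_t}$ and uniform ellipticity of $\tilde g_t$.

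With the geometry of $\tilde g_t$ thus controlled, the bound on $|d\tilde f_t|$ in a strictly supercritical Lebesgue space, combined with the injectivity radius bound, excludes concentration of Dirichlet energy at time $T$, so Struwe's extension criterion for the harmonic map heat flow on surfaces \cite{Struwe1985} extends $\tilde f_t$ smoothly up to $t=T$. Feeding this regularity into the semilinear parabolic equation satisfied by $u_t$, whose coefficients are controlled by the precompact family $\bar g_t$, parabolic Schauder theory \cite{Ladyzenskaja1995, Krylov1996, Schlag1996} gives uniform $C^{k,\alpha}$ bounds on $u_t$ for all $k$, and hence a smooth limit $\tilde g_T$. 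Short time existence for the harmonic Ricci flow applied to the datum $(\tilde g_T, \tilde f_T)$ then yields an extension past $T$, and pulling back by a smooth extension of $\phi_t$ gives the desired extension of $(g_t, f_t)$.

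The main obstacle, in my view, is the coupling introduced by the time-dependent family $\phi_t$. Theorem \ref{GeomCtrl} supplies $\phi_t$ via a construction that does not immediately provide any temporal regularity, so additional work is required to show that $\phi_t$ is smooth in $t$ with values in $\Diff(M)$. Only then can the Lie derivative terms in the modified flow be controlled in the function spaces required by Struwe's and Schauder's theory, and only then does the smooth convergence of $(\tilde g_t, \tilde f_t)$ at $t=T$ translate into smooth convergence of the original $(g_t, f_t)$.
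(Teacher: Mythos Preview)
Your overall architecture---control the geometry via Theorem~\ref{GeomCtrl}, then control the map, then bootstrap---matches the paper. But the ``main obstacle'' you flag at the end is a genuine gap in your argument, and the paper resolves it by machinery you do not invoke.

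The diffeomorphisms $\phi_t$ are not produced by Theorem~\ref{GeomCtrl} alone; they come from the split flow construction of Section~\ref{splitting} (Lemma~\ref{Pullback}, Proposition~\ref{SFE}), which builds $\phi_t$ as the flow of a time-dependent vector field $X_t$ satisfying the elliptic equation~\eqref{XDef}. This gives $\phi_t$ smooth in $t$ by construction, and Proposition~\ref{XRhoEst} bounds $\|X_t\|_{W^{1,p}}$ in terms of $\|Q_m(g_t,\phi_t)\|_{L^p}$. With the hypotheses of the theorem one gets $X_t \in W^{1,2+\epsilon} \hookrightarrow L^\infty$ uniformly, so the extra Lie derivative term $d\phi(X)$ in the modified map equation is in $L^{2+\epsilon}$. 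Similarly, Lemma~\ref{VelCtrl} and Theorem~\ref{HorCurveCtrl} give uniform $C^k$ control on the horizontal curve $\bar g_t$. Without this framework you have no handle on $\partial_t \phi_t$, and your proposed invocation of Struwe's criterion cannot even be formulated, since the modified flow equation has an uncontrolled first-order term.

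A second, smaller divergence: once the split flow equations are in hand, the paper does not use Struwe's concentration criterion at all. It argues directly that the right hand side of $(\partial_t + \Delta_{g_t})\phi_t = A(\phi_t)(d\phi_t,d\phi_t) - d\phi_t(X_t)$ lies in $L^{2+\epsilon}$, applies parabolic $L^p$ theory to get $\phi \in W^{2,1}_{2+\epsilon}$, and then uses Sobolev embedding to obtain $d\phi \in C^{\alpha,\alpha/2}$. This is more elementary than adapting Struwe's $\epsilon$-regularity to a time-dependent metric with only $C^{0,\alpha}$ coefficients. Finally, note that the paper's flow is volume normalised, so your Gr\"onwall argument for the volume is unnecessary (though harmless).
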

\begin{ithm}\label{IntBC}
  Suppose $(g_t, \varphi_t)$ is a solution of the spinor flow on $M \times \halfopen{0}{T}$ with
  $$\sup_{0 \leq t < T} \int_M |\nabla^2 \varphi_t|^q \vol_{g_t} < \infty$$
  for some $q > 4$ and
  $$\inf_{0 \leq t < T} \inj(M, g_t) > 0.$$
  Then the solution can be extended smoothly to a larger interval $\halfopen{0}{\tilde T}$, $\tilde T > T$.
\end{ithm}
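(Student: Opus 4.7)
The plan is to reduce the problem to the hypotheses of Theorem \ref{GeomCtrl} and then invoke parabolic regularity on the geometrically controlled background to extend the flow past $T$. Since the injectivity radius bound is already assumed, two further geometric quantities need to be estimated: $\Vol(M,g_t)$ and $\int_M |R_{g_t}|^2 \vol_{g_t}$.

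First I would establish the volume bound. In dimension two the spinorial energy $\mc{E}(g,\varphi) = \tfrac{1}{2}\int_M |\nabla\varphi|^2 \vol_g$ is invariant under constant rescalings $g \mapsto \lambda g$, the distinguishing feature of the surface case emphasized by Ammann, Weiss and Witt. Differentiating $\mc{E}(\lambda g,\varphi) \equiv \mc{E}(g,\varphi)$ at $\lambda = 1$ yields $\int_M \tr_{g_t}(\grad_{g_t}\mc{E}) \vol_{g_t} = 0$, and since the evolution of $g_t$ is driven by $\grad_{g_t}\mc{E}$, this gives $\tfrac{d}{dt}\Vol(M,g_t) = 0$ and hence $\Vol(M,g_t) \equiv \Vol(M,g_0)$ along the flow. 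For the curvature bound I would appeal to the Lichnerowicz--Weitzenböck identity $D_g^2\varphi = \nabla^*\nabla\varphi + \tfrac{\scal_g}{4}\varphi$ combined with the pointwise normalization $|\varphi|_g \equiv 1$ to obtain the pointwise estimate
$$|\scal_g| \leq 4(|\nabla^*\nabla\varphi| + |D_g^2\varphi|) \leq C|\nabla^2\varphi|_g.$$
Raising to the $q$-th power, integrating, and using the volume bound together with Hölder's inequality (noting $q > 4 > 2$) yields $\sup_t\int_M |R_{g_t}|^2 \vol_{g_t} < \infty$. Theorem \ref{GeomCtrl} then provides diffeomorphisms $f_t$, conformal factors $u_t$ uniformly bounded in $H^2(M,\hat g)$, and a family $\bar g_t$ of constant curvature metrics contained in a $C^k$-precompact subset of $\mc{M}^{cc}$ for every $k$, with $g_t = f_t^*(e^{2u_t}\bar g_t)$.

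Next I would transfer the flow to this gauge by setting $\tilde g_t = e^{2u_t}\bar g_t$ and $\tilde\varphi_t = (f_t^{-1})^*\varphi_t$. By diffeomorphism invariance of the spinor flow, the new pair satisfies the spinor flow equations up to an additional Lie derivative along the time-dependent vector field generating $f_t$. In this gauge the conformal factor $u_t$ obeys a scalar parabolic equation of Liouville type with forcing quadratic in $\nabla\tilde\varphi_t$, while $\tilde\varphi_t$ obeys a heat-type equation whose coefficients depend on $u_t$ and $\bar g_t$. The hypothesis places $\nabla^2\tilde\varphi_t$ in $L^q$ with $q > 4$, and the Sobolev embedding $W^{2,q}(M^2) \hookrightarrow C^{1,1-2/q}$ yields Hölder control on $\tilde\varphi_t$ and its first derivative. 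After imposing a De Turck-type modification to render the coupled system strictly parabolic, I would apply parabolic Schauder estimates in the style of Ladyzhenskaya--Solonnikov--Ural'tseva to bootstrap $(\tilde g_t, \tilde\varphi_t)$ to uniform $C^k$ bounds as $t \to T$. Passing to the smooth limit at $t = T$ and invoking short time existence for the spinor flow then produces the required extension to $\halfopen{0}{\tilde T}$.

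The main obstacle will be the bootstrap step. The pulled-back spinor flow is only weakly parabolic, so one must first perform a De Turck-type modification compatible with the splitting produced by Theorem \ref{GeomCtrl}, and then close the bootstrap loop between the Liouville equation for $u_t$ (whose forcing $|\nabla\tilde\varphi_t|^2$ lies a priori only in $L^{q/2}$) and the spinor equation for $\tilde\varphi_t$ (whose linear coefficients depend on $\nabla u_t$). The exponent $q > 4$ is tuned precisely so that $q/2 > 2$, putting this forcing in a parabolic Morrey--Hölder space from which Schauder theory in two space dimensions can recover the regularity needed at each iteration.
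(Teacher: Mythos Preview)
Your overall strategy matches the paper's: establish the volume and $L^2$-curvature bounds needed for Theorem~\ref{GeomCtrl}, pass to the split-flow gauge, and bootstrap via parabolic regularity. Your derivations of the volume conservation (from scale invariance in dimension two) and of the curvature bound are correct; the paper reads off $|R_g|$ slightly more directly from the antisymmetric part of $\nabla^2\varphi$, but the conclusion is the same.

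There are, however, two substantive gaps. First, your proposed ``De Turck-type modification to render the coupled system strictly parabolic'' is misplaced. The whole point of the horizontal gauge from Section~\ref{splitting} is that the evolution equations for $u_t$ and $\varphi_t$ are \emph{already} uniformly parabolic once expressed relative to $\bar g_t$ (Proposition~\ref{DSFE}), with the diffeomorphism correction absorbed into auxiliary quantities $\rho,\tilde\rho,X$ that solve elliptic equations on each time slice. No further gauge-fixing is required or available. The curve $\bar g_t$ itself does not obey a parabolic equation; it is controlled instead by Theorem~\ref{HorCurveCtrl}, which needs a bound on $\|\partial_t\bar g_t\|_{L^2}$. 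This comes (via Lemma~\ref{VelCtrl}) from the gradient-flow structure of the spinor flow, a step you do not mention.

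Second, the bootstrap is considerably more delicate than you indicate. The right-hand side of the $u$-equation contains $\rho$, whose defining equation~(\ref{RhoDef}) involves $\delta_{\bar g}\delta_{\bar g}\mathring Q_1$, naively two derivatives of a quantity already second-order in $\varphi$ and $u$. Closing the loop with only $\nabla^2\varphi\in L^q$ requires the Bianchi-type identity~(\ref{DQ1SF}), $\delta_{\bar g}\mathring Q_1 = \tfrac{1}{2}e^{2u}\,d\tr_g Q_1$, and the decomposition $\rho=\tilde\rho+\tfrac{1}{2}\tr_g Q_1$ so that $\tilde\rho$ satisfies the better equation~(\ref{RhoTSF}). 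The paper's argument then cycles through $\tilde\rho\to X\to u\to\varphi$ in several passes, using at each stage the specific algebraic structure of $Q_1$; your remark that $q>4$ ensures $q/2>2$ is correct but is only one of the derivative counts that must be checked.
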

In the case of the spinor flow, even the following pointwise blowup criterium is new.
\begin{ithm}\label{UniBC}
  Suppose $(g_t, \varphi_t)$ is a solution of the spinor flow on $M \times \halfopen{0}{T}$ with
  $$\sup_{\substack{x \in M \\ 0 \leq t < T}}|\nabla^2 \varphi_t(x)| < \infty.$$
  Then the solution can be extended smoothly to a larger interval $\halfopen{0}{\tilde T}$, $\tilde T > T$.
\end{ithm}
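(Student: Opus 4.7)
The strategy is to reduce Theorem \ref{UniBC} to the integral blow-up criterion Theorem \ref{IntBC}. Since the pointwise bound $|\nabla^2 \varphi_t| \leq C$ gives $\int_M |\nabla^2 \varphi_t|^q \vol_{g_t} \leq C^q \Vol(M, g_t)$ for every $q > 0$, the task reduces to establishing (i) a uniform upper bound on $\Vol(M, g_t)$ and (ii) a uniform lower bound on $\inj(M, g_t)$ throughout $\halfopen{0}{T}$. Both will follow from uniform equivalence of the metrics $g_t$ to the initial metric $g_0$.

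Since spinors in the spinor flow have unit length, differentiating $|\varphi_t|^2 \equiv 1$ twice yields $|\nabla \varphi_t|^2 = -\langle \Delta_{g_t} \varphi_t, \varphi_t \rangle$, so from $|\varphi_t| = 1$ and the pointwise hypothesis one obtains a pointwise bound on $|\nabla \varphi_t|$ as well. The metric component of the spinor flow evolution $\partial_t g_t$ is, by its variational origin in Ammann--Weiss--Witt, an algebraic expression in $\varphi_t$, $\nabla \varphi_t$, $\nabla^2 \varphi_t$, so the above bounds give $|\partial_t g_t|_{g_t} \leq C_1$. A Gronwall argument applied to this pointwise ODE for $g_t$ in the space of symmetric bilinear forms then yields the uniform equivalence $e^{-C_1 T} g_0 \leq g_t \leq e^{C_1 T} g_0$ on $\halfopen{0}{T}$, from which (i) follows immediately.

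For (ii), the commutator identity on the spinor bundle $(\nabla^2_{X,Y} - \nabla^2_{Y,X}) \varphi = R^S(X,Y) \varphi$ combined with the fact that the spin curvature $R^S$ on a surface is algebraically proportional to the Gauss curvature $K_{g_t}$ times a Clifford action yields, again using $|\varphi_t| = 1$, a pointwise estimate $|K_{g_t}| \leq C_2 |\nabla^2 \varphi_t| \leq C_2 C$. Hence the sectional curvature of $g_t$ is uniformly bounded on $M \times \halfopen{0}{T}$, so the conjugate radius is uniformly bounded below. Simultaneously, the metric equivalence implies that the length of any closed geodesic of $g_t$ is at least $e^{-C_1 T/2}$ times its $g_0$-length, so the length of the shortest closed geodesic of $g_t$ is bounded below by a positive constant. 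Klingenberg's lemma then gives $\inf_{0 \leq t < T} \inj(M, g_t) > 0$, and Theorem \ref{IntBC} applied with any $q > 4$ yields the desired smooth extension.

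The principal obstacle is verifying the algebraic structure used in the second paragraph: that $\partial_t g_t$ really is bounded pointwise by the $C^0$ norms of $\varphi_t$ and its first two covariant derivatives, with a bound independent of the underlying metric (which itself is evolving). This requires inspecting the precise form of the stress-energy tensor of the spinorial energy in the two-dimensional, scale-invariant setting of Ammann--Weiss--Witt; once this algebraic step is in place, the analytic part of the argument is purely bookkeeping and the application of Theorem \ref{IntBC}.
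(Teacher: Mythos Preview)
Your reduction to Theorem \ref{IntBC} via a pointwise bound on $|\partial_t g_t|_{g_t}$, metric equivalence, and a curvature bound extracted from the antisymmetric part of $\nabla^2\varphi$ is exactly the route the paper takes. The paper phrases the last step slightly differently (it invokes volume preservation of the two-dimensional spinor flow rather than deducing the volume bound from metric equivalence, and it passes through a diameter bound), but the substance is the same, and your worry about the ``principal obstacle'' is unfounded: the explicit formula for $Q_1$ in the paper shows it is an algebraic expression in $\varphi,\nabla\varphi,\nabla^2\varphi$ whose $g$-norm is bounded by a universal constant times $|\nabla\varphi|^2+|\nabla^2\varphi|$.

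There is, however, a genuine gap in your Klingenberg step. From $e^{-C_1T}g_0\le g_t\le e^{C_1T}g_0$ you correctly get $L_{g_t}(\gamma)\ge e^{-C_1T/2}L_{g_0}(\gamma)$ for any closed curve $\gamma$, but this does \emph{not} bound the length of the shortest closed $g_t$-geodesic from below: such a geodesic is merely an arbitrary closed curve from the $g_0$ viewpoint, and an arbitrary (possibly contractible) closed curve on $(M,g_0)$ can have arbitrarily small $g_0$-length. You would need every closed $g_t$-geodesic to be homotopically nontrivial (so that the $g_0$-systole applies), which is not guaranteed once the curvature of $g_t$ is allowed to be positive somewhere. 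The fix is immediate with the ingredients you already have: metric equivalence gives uniform upper bounds on diameter and uniform lower bounds on volume, and together with your two-sided curvature bound Cheeger's lemma yields $\inf_{t}\inj(M,g_t)>0$. This is precisely how the paper closes the argument.
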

The paper is structured as follows. After recalling some facts related to parabolic equations in section \ref{parabolic}, we recall the splitting of geometric flows on surfaces into conformal factors and horizontal curves of constant curvature metrics introduced by Buzano and Rupflin in \cite{Buzano2017}, minorly generalizing the setting in section \ref{splitting}. Then we prove theorem \ref{GeomCtrl} in section \ref{GC}. Finally, in section \ref{blowup} we turn to the blowup criteria themselves.

\section*{Acknowledgements}
The author thanks H. Weiß for encouragement and many discussions. This work was completed during a visit at Queen Mary University London. The author gratefully acknowledges financial support of the DAAD during this period. Furthermore, he thanks Queen Mary University and its geometry and analysis group for the hospitality and his host R. Buzano for valuable feedback regarding this work.

\section{Parabolic function spaces and estimates}
\label{parabolic}
We introduce some notation relating to parabolic equations. Let $M$ be a closed Riemannian manifold, $I \subset \R$ some interval. We denote by
$$W^{2,1}_p(M \times I)$$
the completion of the space of time dependent functions $\CI(M \times I)$ with the norm
$$\|f\|_{W^{2,1}_p(M \times I)}^p = \int_I \int_M |\partial_t f_t|^p + |\nabla f_t|^p + |\nabla^2 f_t|^p \vol_g dt.$$
If $M$ and $I$ are clear from the context, we will just write $W^{2,1}_p$ instead. The significance of this space is that for a parabolic equation
$$\partial_t u_t + L_t u_t = f_t$$
there is the estimate
$$\|u_t\|_{W^{2,1}_p (M \times I)} \leq C \left( \|u\|_{L^p(M \times I)} + \|f\|_{L^p(M \times I)} \right).$$
On $M \times I$ we introduce the parabolic distance
$$d^p((x_1,t_1), (x_2, t_2)) = \left( d_g(x_1, x_2)^2 + |t_1 - t_2| \right)^{1/2}.$$
With respect to this distance we can define the Hölder constant in the usual fashion for any $\alpha \in (0,1)$. For a function $f$ on $M \times I$ we denote its Hölder constant with respect to $d^p$ by $[f]_{\alpha, \alpha/2}$. The space of $\alpha,\alpha/2$-Hölder continuous functions is denoted by
$$C^{\alpha,\alpha/2}(M \times I)$$
and is given the norm
$$\|f\|_{C^{\alpha,\alpha/2}} = \sup_{x \in M} |f(x)| + [f]_{\alpha,\alpha/2}.$$
We also introduce the space of functions whose first $k$ {\em spatial} derivatives are $\alpha,\alpha/2$ Hölder continuous
$$C_k^{\alpha,\alpha/2}(M \times I) = \{f \in C^{\alpha, \alpha/2}(M \times I) : \nabla^i f \in C^{\alpha, \alpha/2} \text{ for all } i \leq k\}$$
with the obvious norm and the space of functions whose first second spatial derivatives and first temporal derivative are $\alpha, \alpha/2$ Hölder continuous
$$C^{2+\alpha,1+\alpha/2}(M \times I) = \{f \in C^{\alpha, \alpha/2}(M \times I) : \nabla f, \nabla^2 f, \partial_t f \in C^{\alpha, \alpha/2} \}.$$
The significance of this space is that for a parabolic equation
$$\partial_t u_t + L_t u_t = f_t$$
there is the Schauder estimate
$$\|u_t\|_{C^{2+\alpha, 1+\alpha/2}} \leq C \left( \|u\|_{C^{\alpha, \alpha/2}} + \|f\|_{C^{\alpha,\alpha/2}} \right).$$
Clearly, there is an embedding
$$C^{2+\alpha, 1+\alpha/2} \hookrightarrow C_2^{\alpha,\alpha/2}.$$
Moreover, there is a parabolic Sobolev embedding
$$W^{2,1}_p (M \times I) \hookrightarrow L^q(M \times I)$$
with
$$q = \frac{(n+2) p}{n+2-2p}.$$
If $p > (n+2)/2$, we get embeddings
$$W^{2,1}_p (M \times I) \hookrightarrow C^{\alpha,\alpha/2}(M \times I)$$
for all
$$\alpha < 2 - \frac{n+2}{p}.$$
If $p > n+2$, we get an embedding
$$W^{2,1}_p (M \times I) \hookrightarrow C_1^{\beta, \beta/2}(M \times I)$$
for all
$$\beta < 1 - \frac{n+2}{p}.$$
These embeddings can be found in \cite{Ladyzenskaja1995}, Lemma II.4.3.

\section{Decomposition of flows}
\label{splitting}
Let $g_t$ be a time-dependent family of Riemannian metrics on a closed surface $M$ with $\chi(M) \leq 0$. The uniformization theorem tells us that there is a unique family $u_t$ of smooth functions, such that
$$\hat g_t = e^{-2u_t} g_t$$
are metrics of constant curvature with volume $1$. By pulling back the family $\hat g_t$ by a family of diffeomorphisms $f_t$, we can arrange for $\bar g_t = f_t^* \hat g_t$ that
$$\partial_t (\bar g_t) \in \mc{H}_{\bar g_t} \subset T_{\bar g_t} \mc{M},$$
where $\mc{H}_g$ is the orthogonal complement of the (infinitesimal) orbit of the diffeomorphism group through $g \in \mc{M}^{cc}$. It turns out that, as Rupflin and Topping observe in \cite{Rupflin2016}, provided the injectivity radii of $\bar g_t$ are bounded from below and the velocity of the curve is bounded above in $L^2$, we get very good control on the family $\bar g_t$.

We will use this observation to split a geometric flow into a family of conformal factors and a family of constant curvature metrics, which satisfy the above condition. We will obtain new evolution equations for the flow. This strategy was used by Buzano and Rupflin to study the harmonic Ricci flow in \cite{Buzano2017}. We will very slightly generalize their results. Where they assumed that the geometric flow consists of an evolving metric and a map from the surface into some fixed manifold, we will instead assume that the geometric flow consists of an evolving metric and a section of a fiber bundle, allowing for different transformation behaviors under diffeomorphisms.

Since we are interested in geometric flows which are coupled to some additional data, we will define the notion of a {\em coupled geometric flow} and we will then show how to split such a flow as indicated above.  We then derive the evolution equations satisfied by this split flow.

At this point we also make a remark regarding notation in this and the following sections: for evolution equations we will often drop the time subscript to make them more legible.

Let $E$ be a fiber bundle over $M$. We require that there exists a pullback operation, at least for diffeomorphisms, i.e. given a diffeomorphism $f \in \Diff(M)$ and a section $s \in \Gamma(E)$, there should exist $f^*s$. Furthermore we ask that there exists a connection in the sense that we can differentiate families of sections, i.e. given a family of sections $s_t$, there exists a notion of time derivative
$$\partial_t|_{t=0} s_t \in T_{s_0} \Gamma(E).$$
\begin{defn}
  \label{CGF}
  Suppose $Q : \mc{M} \times \Gamma(E) \to T\mc{M} \times T\Gamma(E)$ is a diffeomorphism invariant vector field, i.e.
  $$f^*Q(g, s) = Q(f^*g, f^*s).$$
  We say that $Q$ defines a {\em coupled geometric flow}. A family $(g_t, s_t) \in \mc{M} \times \Gamma(E)$, which satisfies
  $$\partial_t (g_t, s_t) = Q(g_t, s_t)$$
  is a solution of this coupled geometric flow.
\end{defn}
Notice that in this setting we have a Lie derivative for sections of the fiber bundle $E$:
$$\mc{L}^E_X s = \partial_t|_{t=0} f_t^*s,$$
where $s \in \Gamma(E)$ and $f_t$ is the flow of the vector field $X$.

We will now introduce the corresponding split flow. Given a constant curvature metric $g$, the tangent space of $\mc{M}^{cc}$ intersected with the space orthogonal to the diffeomorphism orbit through $g$ is given by
$$\mc{H}_g = \{ h \in \Gamma( \odot^2 T^* M) : \delta_g h = 0, \tr_g h = 0\}.$$
The following result from \cite{Buzano2017}  provides that a family of constant curvature metrics can be pulled back to be tangent to $\mc{H}_g$. This can be considered to be a canonical gauge for the family.
\begin{lemma}[Lemma 2.2,\cite{Buzano2017}]
  \label{Pullback}
  Given a family of constant curvature metrics $\bar g_t \in \mc{M}^{cc}$, there exists a unique family of diffeomorphisms $f_t \in \mc{M}^{cc}$, such that
  $$\partial_t (f_t^* \bar g_t) \in \mc{H}_{\bar g_t}$$
  and $f_0 = \id_M$.
\end{lemma}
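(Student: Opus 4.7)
The plan is to reformulate the gauge condition as an elliptic equation, at each time $t$, for the generating vector field $Y_t := (\partial_t f_t) \circ f_t^{-1}$ of the sought family of diffeomorphisms, solve that equation by Fredholm theory, and recover $f_t$ by integration. Using naturality of the pullback I would first rewrite
$$\partial_t (f_t^* \bar g_t) \;=\; f_t^*\bigl(\partial_t \bar g_t + \mc{L}_{Y_t} \bar g_t\bigr),$$
and, observing that the traceless–transverse conditions $\delta_g h = 0$ and $\tr_g h = 0$ pull back naturally (so that $f_t^* \mc{H}_{\bar g_t} = \mc{H}_{f_t^* \bar g_t}$), reduce the lemma to the pointwise-in-$t$ requirement
$$\partial_t \bar g_t + \mc{L}_{Y_t} \bar g_t \in \mc{H}_{\bar g_t} \quad\text{for every } t.$$
Once $Y_t$ is in hand, $f_t$ is recovered by integrating the time-dependent ODE $\partial_t f_t = Y_t \circ f_t$ with $f_0 = \id_M$; global existence is automatic from compactness of $M$.

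To produce $Y_t$ I would invoke the $L^2$-orthogonal splitting of the tangent space to $\mc{M}^{cc}$ at a constant curvature metric,
$$T_{\bar g_t} \mc{M}^{cc} \;=\; \mc{H}_{\bar g_t} \;\oplus\; \bigl\{ \mc{L}_X \bar g_t : X \in \Gamma(TM) \bigr\},$$
a standard consequence of the elliptic theory of the Killing operator $\delta^*_g X = \tfrac12 \mc{L}_X g$ (which is overdetermined elliptic, with formal adjoint equal, up to sign, to the divergence $\delta_g$); this is essentially the Ebin slice theorem restricted to constant curvature metrics. Projecting $\partial_t \bar g_t$ onto the second summand yields a vector field $X_t$, taken in the $L^2$-orthogonal complement of $\ker \delta^*_{\bar g_t}$, with $\mc{L}_{X_t} \bar g_t = \partial_t \bar g_t - h_t^{\mc{H}}$; setting $Y_t := -X_t$ then meets the required condition. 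Smoothness of $(x,t) \mapsto Y_t(x)$ follows from elliptic regularity and the smooth dependence of the Fredholm inverse on parameters.

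Uniqueness of $f_t$ would follow from the uniqueness of the orthogonal decomposition together with uniqueness for the flow ODE. The main obstacle I anticipate is the case $\chi(M) = 0$, where flat metrics carry a two-dimensional space of Killing fields: there, the lemma's uniqueness must be interpreted modulo this finite-dimensional ambiguity, or an additional orthogonality normalization on $Y_t$ must be imposed to pick out a distinguished $f_t$. For $\chi(M) < 0$, $\ker \delta^*_{\bar g_t}$ is trivial, so the inversion is automatic and $f_t$ is singled out without further input. A secondary technical check is the smooth $t$-dependence of the projection $\partial_t \bar g_t \mapsto h_t^{\mc{H}}$, which reduces to smooth dependence of the associated Laplace-type operator on the metric $\bar g_t$.
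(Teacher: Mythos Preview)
The paper does not prove this lemma at all: it is quoted verbatim as Lemma~2.2 of \cite{Buzano2017} and used as a black box. Immediately after stating it, the paper only comments on the $\chi(M)=0$ subtlety (Killing fields on the flat torus) and refers to a normalization from \cite{Buzano2017}, remark~2.1, exactly as you anticipated. So there is no ``paper's own proof'' to compare against.

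Your sketch is the standard argument and is correct in outline. One small point: the lemma as printed has $\mc{H}_{\bar g_t}$ on the right, but the intended meaning (clear from the subsequent application, where $\bar g_t := f_t^* \hat g_t$ is required to satisfy $\partial_t \bar g_t \in \mc{H}_{\bar g_t}$) is $\mc{H}_{f_t^* \bar g_t}$; you implicitly read it this way when you invoked $f_t^* \mc{H}_{\bar g_t} = \mc{H}_{f_t^* \bar g_t}$, which is fine. Also, strictly speaking the decomposition you want is of all of $\Gamma(\odot^2 T^*M)$ (or at least of $T_{\bar g_t}\mc{M}^{cc}$ once you have checked $\partial_t \bar g_t$ lies there, which is immediate), but since Lie derivatives of $\bar g_t$ are already tangent to $\mc{M}^{cc}$ this is harmless. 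Your handling of uniqueness in the torus case matches the paper's remark.
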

Now suppose $(\tilde g_t, \tilde s_t)$ is a solution of the coupled geometric flow given by a vector field $Q$. First we apply the uniformization theorem to $\tilde g_t$ to obtain a unique conformal metric $\hat g_t = e^{-2\hat u_t} \tilde g_t$. Then, applying the lemma to the family $\hat g_t$, we obtain a family of diffeomorphisms $f_t$, such that $\bar g_t = f_t^* \hat g_t$ fulfills the above relation, i.e. $\partial_t \bar g_t \in \mc{H}_{\bar g_t}$. There is a slight subtlety here in the case of $\chi(M) = 0$. This is because the flat torus has Killing vector fields. At any time $t$ there is a choice involved when defining the diffeomorphisms $f_t$, which arise from a time dependent family of vector fields. This issue can be overcome by introducing a normalization condition as explained in remark 2.1 in \cite{Buzano2017}.

We then define
$$g_t = f_t^* \tilde g_t = e^{2 u_t} \bar g_t,$$
where $u_t = \hat u_t \circ f_t$. We also define $s_t = f_t^* s_t$.
\begin{defn}
  \label{SF}
  The family $(\bar g_t, u_t, s_t, f_t)$ is the {\em split flow} corresponding to the flow $(\tilde g_t, \tilde s_t)$.
\end{defn}
We will now derive the evolution equations of $(\bar g_t, u_t, s_t)$. Let $X_t$ be the time dependent vector field generating the family of diffeomorphisms $f_t$.
Notice that we can split the vector field $Q$ into components $Q = (Q_m, Q_E)$.
We start with the evolution of $\bar g_t$. We have
\begin{align*}
  \partial_t \bar g_t & = \partial_t (e^{-2u_t} f_t^* \tilde g_t) \\
  & = -2 e^{-2u_t} (\partial_t u_t) e^{2u_t} \bar g_t + e^{-2u_t} \mc{L}_{X_t} g_t + e^{-2u_t} f_t^* \partial_t \tilde g_t \\
  & = -2 (\partial_t u_t) \bar g_t + \mc{L}_{X_t} \bar g_t + 2 (X_t u_t) \bar g + e^{-2u_t} f_t^* Q_m(\tilde g_t, \tilde s_t) \\
  & = (-2 \partial_t u_t + 2 X_t u_t) \bar g + \mc{L}_{X_t} \bar g_t + e^{-2u_t} Q_m(g_t, s_t) \\
  & = \left(-2 \partial_t u_t + 2 X_t u_t + \frac{1}{2} \tr_{g_t} Q_m(g_t, s_t)\right) \bar g_t + \mc{L}_{X_t} \bar g_t + e^{-2u_t} Q_m(g_t, s_t)\\
  & = \rho_t \bar g_t + \mc{L}_{X_t} \bar g_t + e^{-2u_t} \mathring{Q}_m(g_t, s_t)
\end{align*}
where
$$\rho_t = -2 \partial_t u_t + 2 X_t u_t + \frac{1}{2} \tr_{g_t} Q_m(g_t, s_t)$$
is a time dependent function on $M$. Rewriting this equation also yields
$$\partial_t u = \frac{1}{4} \tr_g Q_m(g, s) + Xu - \frac{1}{2} \rho.$$
It turns out that $\rho$ satisfies an elliptic equation on every time slice $M \times \{t\}$, which we will derive now. Observe that $\partial_t \bar g_t$ is a variation that preserves the constant curvature condition. Since the constant curvature condition is diffeomorphism invariant, the variation
$$\rho_t \bar g_t + e^{-2u_t} \mathring{Q}_m(g_t, s_t)$$
also preserves constant curvature. Let $h = \rho_t \bar g_t + e^{-2u_t} \mathring{Q}_m(g_t, s_t)$. Recall the formula
$$\partial_t|_{t=0} R_{\bar g+th} = -\frac{1}{2} R_{\bar g} \tr_{\bar g} h - \Delta_{\bar g} \tr_{\bar g} h + \delta_{\bar g} \delta_{\bar g} h.$$
For our variation $h$ we thus obtain
$$0 = - R_{\bar g} \rho - 2 \Delta_{\bar g} \rho + \Delta_{\bar g} \rho + \delta_{\bar g} \delta_{\bar g} (e^{-2u} \mathring{Q}_m(g, s)).$$ 
Thus $\rho$ fulfills the equation
$$\Delta_{\bar g} \rho + R_{\bar g} \rho - \delta_{\bar g} \delta_{\bar g} (e^{-2u} \mathring{Q}_m(g,s)) = 0$$
at any time $t$. The relation $\partial_t \bar g_t \in \mc{H}_{\bar g_t}$ implies in particular $\delta_{\bar g_t} \partial_t \bar g_t = 0$. Using that $\delta_g^* X^{\flat} = \mc{L}_X g$, we find
$$\delta_{\bar g} \delta_{\bar g}^* X^{\flat} = -\delta_{\bar g}(e^{-2u}(\mathring{Q}_m(g, \varphi) + \rho g_0),$$
for every time $t$. This is also an elliptic equation for $X$. The normalization condition from remark 2.1 in \cite{Buzano2017} ensures a unique solution to this equation.

Let
$$P_{\bar g} : \Gamma(\odot^2 T^*M) \to \mc{H}_{\bar g}$$
be the orthogonal projection. The equation for $\partial_t \bar g$ implies
$$\partial_t \bar g = P_{\bar g}(e^{-2u} Q_m(g, s)).$$
Finally, the evolution of $s_t$ is given by
\begin{align*}
  \partial_t s_t & = \partial_t (f_t^* \tilde s_t) \\
  & = f_t^* (\partial_t \tilde s_t + \mc{L}_X^F \tilde s_t) \\
  & = f_t^* (Q_E(\tilde g_t, \tilde s_t) + \mc{L}_{X_t}^F \tilde s_t) \\
  & = Q_E(g_t, s_t) + \mc{L}_X^F s_t.
\end{align*}
We sum up these results in the following proposition. Compare \cite{Buzano2017}, prop 2.3.
\begin{prop}
  \label{SFE}
  Suppose $(\tilde g_t, \tilde s_t)$ is a coupled geometric flow in the sense of definition \ref{CGF}. Let $(\bar g_t, u_t, s_t)$ be the split flow of $(\tilde g_t, \tilde s_t)$ in the sense of definition \ref{SF} and let $f_t$ be the corresponding family of diffeomorphisms. Let $X_t$ be the generating vector field of $f_t$. The split flow satisfies the following equations:
  \begin{align}
    \partial_t \bar g & = P_{\bar g}(e^{-2u} Q_m(g, s))\\
    \partial_t u & = \frac{1}{4} \tr_g Q_m(g, s) + Xu - \frac{1}{2} \rho\\
    \partial_t \varphi & = Q_E(g, s) + \mc{L}_X^F s
  \end{align}
  where $P_{\bar g}: \Gamma(\odot^2 T^*M) \to \mc{H}_{\bar g}$ is the orthogonal projection and $\rho$ is the unique solution of
  \begin{equation}
    \label{RhoDef}
    \Delta_{\bar g} \rho + R_{\bar g} \rho - \delta_{\bar g} \delta_{\bar g} (e^{-2u} \mathring{Q}_m(g,s)) = 0.
  \end{equation}
  Furthermore $X$ solves
  \begin{equation}
    \label{XDef}
    \delta_{\bar g} \delta_{\bar g}^* X^{\flat} = -\delta_{\bar g}(e^{-2u}(\mathring{Q}_m(g, s) + \rho \bar g).
  \end{equation}
\end{prop}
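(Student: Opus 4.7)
The plan is to derive each of the three evolution equations and the two elliptic constraints by differentiating the defining relations $\bar g_t = e^{-2u_t} f_t^* \tilde g_t$ and $s_t = f_t^* \tilde s_t$ in time, exploiting (i) the diffeomorphism equivariance of $Q$, which converts $f_t^* Q(\tilde g_t, \tilde s_t)$ into $Q(g_t, s_t)$, and (ii) the pointwise orthogonal splitting of symmetric $2$-tensors on $(M, \bar g_t)$ into a trace part $\rho \bar g$, a pure divergence part $\mc{L}_X \bar g$, and a part lying in $\mc{H}_{\bar g_t}$. The horizontal gauge condition $\partial_t \bar g_t \in \mc{H}_{\bar g_t}$ supplied by Lemma \ref{Pullback} is what ultimately selects both the scalar field $\rho_t$ and the generating vector field $X_t$.

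First I would compute $\partial_t \bar g_t$ directly from the product rule, picking up a term from $\partial_t u_t$, a Lie derivative term from $\partial_t f_t$, and a term $e^{-2u_t} Q_m(g_t, s_t)$ from the flow equation and equivariance. Splitting $Q_m = \mathring{Q}_m + \tfrac{1}{2}(\tr_g Q_m) g$ and collecting the coefficients of $\bar g_t$ yields an identity of the form
\[
  \partial_t \bar g_t = \rho_t \bar g_t + \mc{L}_{X_t} \bar g_t + e^{-2u_t} \mathring{Q}_m(g_t, s_t),
\]
with $\rho_t = -2\partial_t u_t + 2 X_t u_t + \tfrac{1}{2} \tr_{g_t} Q_m(g_t, s_t)$; solving this definition for $\partial_t u_t$ gives the stated evolution equation for $u_t$. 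To obtain the evolution equation for $\bar g_t$, I would project both sides onto $\mc{H}_{\bar g_t}$: the left-hand side is fixed by the gauge, while $\rho_t \bar g_t$ is pure trace, $\mc{L}_{X_t} \bar g_t$ is tangent to the diffeomorphism orbit, and the difference $e^{-2u_t}(Q_m - \mathring{Q}_m)$ is again pure trace, so $P_{\bar g_t}$ collapses the right-hand side to $P_{\bar g_t}(e^{-2u_t} Q_m(g_t, s_t))$.

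The elliptic equation for $\rho_t$ then comes from the fact that $\bar g_t \in \mc{M}^{cc}$ for all $t$, so any legitimate tangent vector must preserve both constant curvature and unit volume. Subtracting the Lie derivative contribution (which preserves scalar curvature automatically) and applying the standard first variation formula
\[
  \partial_t|_{t=0} R_{\bar g + th} = -\tfrac{1}{2} R_{\bar g} \tr_{\bar g} h - \Delta_{\bar g} \tr_{\bar g} h + \delta_{\bar g} \delta_{\bar g} h
\]
to $h = \rho_t \bar g_t + e^{-2u_t} \mathring{Q}_m(g_t, s_t)$, using $\tr_{\bar g_t} \mathring{Q}_m = 0$, produces (\ref{RhoDef}). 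The equation for $X_t$ is obtained analogously from the divergence constraint $\delta_{\bar g_t} \partial_t \bar g_t = 0$ built into $\mc{H}_{\bar g_t}$: applying $\delta_{\bar g_t}$ to the above identity and using the identification of $\mc{L}_X \bar g$ with $\delta_{\bar g}^* X^{\flat}$ isolates (\ref{XDef}), while the normalization of Remark 2.1 in \cite{Buzano2017} removes the Killing-field ambiguity in the case $\chi(M) = 0$. The equation for $s_t$ is then immediate by differentiating $s_t = f_t^* \tilde s_t$, since $f_t^* \partial_t \tilde s_t = f_t^* Q_E(\tilde g_t, \tilde s_t) = Q_E(g_t, s_t)$ by equivariance, with the Lie derivative $\mc{L}_{X_t}^F s_t$ accounting for the time-dependence of the pullback.

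The main bookkeeping obstacle is simply keeping the three metrics $\tilde g_t$, $\hat g_t = e^{-2 \hat u_t} \tilde g_t$, and $g_t = e^{2u_t} \bar g_t$ cleanly separated while correctly propagating the conformal factor through the Lie derivative (this is what produces the $X_t u_t$ term) and through the trace operator (converting between $\tr_{g}$ and $\tr_{\bar g}$). Once these conversions are handled uniformly, the proposition reduces to reading off the trace, longitudinal, and horizontal components of $\partial_t \bar g_t$ dictated by the decomposition.
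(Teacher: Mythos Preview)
Your proposal is correct and follows essentially the same approach as the paper: the paper likewise computes $\partial_t \bar g_t$ directly from $\bar g_t = e^{-2u_t} f_t^* \tilde g_t$, arrives at the identity $\partial_t \bar g_t = \rho_t \bar g_t + \mc{L}_{X_t} \bar g_t + e^{-2u_t} \mathring{Q}_m(g_t, s_t)$ with the same $\rho_t$, reads off the $u$-equation, derives (\ref{RhoDef}) from the first variation of scalar curvature applied to $h = \rho_t \bar g_t + e^{-2u_t} \mathring{Q}_m$, obtains (\ref{XDef}) from $\delta_{\bar g_t} \partial_t \bar g_t = 0$, and then projects onto $\mc{H}_{\bar g_t}$ and differentiates $s_t = f_t^* \tilde s_t$ exactly as you describe. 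One small wording point: the trace/Lie-derivative/horizontal splitting you invoke is an $L^2$-orthogonal decomposition rather than a pointwise one, but this does not affect the argument.
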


We have the following straightforward estimates for $X$ and $\rho$. The proposition is essentially lemma 2.5 in \cite{Buzano2017}, slightly adapted to our case.
\begin{prop}
  \label{XRhoEst}
  If $\rho$ solves equation \ref{RhoDef}, then
  $$\|\rho\|_{L^p(M, \bar g)} \leq C(\inj(M, \bar g)) \|e^{-2u} \mathring{Q}_m(g, s)\|_{L^p(M, \bar g)} \leq C(\inj(M,\bar g), \|u\|_{C^0}) \|Q_m(g, s)\|_{L^p(M, g)}.$$
  If $X$ solves equation \ref{XDef}, then
  $$\|X\|_{W^{1,p}(M, \bar g)} \leq C(\inj(M, \bar g)) \|e^{-2u} Q_m(g, s)\|_{L^p(M, \bar g)} \leq C(\inj(M,\bar g), \|u\|_{C^0}) \|Q_m(g, s)\|_{L^p(M, g)}.$$
\end{prop}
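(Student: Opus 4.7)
The plan is to decouple the proof into three stages: an $L^p$ estimate for $\rho$ from equation \eqref{RhoDef} by a duality argument, a $W^{1,p}$ estimate for $X$ from equation \eqref{XDef} exploiting the divergence form of its right-hand side, and finally a conformal change of variables from $\bar g$ to $g$ to absorb the factor $e^{-2u}$ and convert the norms.

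For the first stage, since $\bar g$ has constant curvature and $\chi(M)\le 0$, $R_{\bar g}$ is a non-positive constant. When $\chi(M)<0$ the operator $\Delta_{\bar g}+R_{\bar g}$ is self-adjoint and invertible; when $\chi(M)=0$ its one-dimensional kernel consists of constants, against which the right-hand side is automatically orthogonal since it is a double divergence. I would pair $\rho$ with an arbitrary test function $\varphi\in L^{p'}$ and integrate by parts twice, moving two derivatives onto $(\Delta_{\bar g}+R_{\bar g})^{-1}\varphi$; the standard $L^{p'}$ Calderón--Zygmund estimate $\|(\Delta_{\bar g}+R_{\bar g})^{-1}\varphi\|_{W^{2,p'}}\le C\|\varphi\|_{L^{p'}}$ from \cite{Wang2004} then yields
$$\|\rho\|_{L^p(M,\bar g)}\le C(\inj(M,\bar g))\,\|e^{-2u}\mathring Q_m(g,s)\|_{L^p(M,\bar g)}.$$

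For the second stage, the operator $\delta_{\bar g}\delta_{\bar g}^*$ on one-forms is formally self-adjoint and second-order elliptic. It is invertible in the hyperbolic case, since hyperbolic surfaces admit no Killing fields, and invertible modulo the normalization of Remark 2.1 in \cite{Buzano2017} in the flat case. Because the right-hand side of \eqref{XDef} is a pure divergence, the elliptic estimate gains one derivative from the divergence structure, giving
$$\|X\|_{W^{1,p}(M,\bar g)}\le C(\inj(M,\bar g))\,\|e^{-2u}(\mathring Q_m(g,s)+\rho\,\bar g)\|_{L^p(M,\bar g)},$$
and substituting the previous bound on $\rho$ produces the desired estimate on $X$ in terms of $\|e^{-2u}Q_m(g,s)\|_{L^p(M,\bar g)}$.

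The second inequality in each line is then a direct conformal change of variables: under $g=e^{2u}\bar g$ a symmetric $(0,2)$-tensor satisfies $|T|_{\bar g}=e^{2u}|T|_g$ while $\vol_{\bar g}=e^{-2u}\vol_g$, so
$$\|e^{-2u}\mathring Q_m(g,s)\|_{L^p(M,\bar g)}^p=\int_M|\mathring Q_m(g,s)|_g^p\,e^{-2u}\vol_g\le e^{2\|u\|_{C^0}}\|Q_m(g,s)\|_{L^p(M,g)}^p,$$
after dominating the trace-free part by the full tensor. The main obstacle will be tracking the dependence of the elliptic constants on $\inj(M,\bar g)$ alone, since $\bar g$ ranges over the whole infinite-dimensional space $\mc{M}^{cc}$: a priori the constants in the $L^p$ elliptic theory depend on the entire geometry of $\bar g$. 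The point is that because $\bar g$ has constant curvature, its isometry class lives in the finite-dimensional moduli space $\mc{S}$, and a positive lower bound on $\inj(M,\bar g)$ confines this class to the compact thick part of $\mc{S}$, yielding uniform constants. This is exactly the mechanism used in the proof of Lemma 2.5 of \cite{Buzano2017}, on which the present proposition is modelled.
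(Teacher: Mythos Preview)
Your proposal is correct and follows essentially the same approach as the paper. The paper phrases the argument abstractly---viewing $\delta_{\bar g}\delta_{\bar g}:L^p\to W^{-2,p}$ and $\delta_{\bar g}:L^p\to W^{-1,p}$ as continuous and inverting $\Delta_{\bar g}+R_{\bar g}:L^p\to W^{-2,p}$ and $\delta_{\bar g}\delta_{\bar g}^*:W^{1,p}\to W^{-1,p}$---whereas your duality/test-function formulation is simply the concrete unpacking of those negative-order Sobolev inverses; the Mumford compactness justification for uniform constants and the conformal change of measure are identical in both.
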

\begin{proof}
  Observe that
  $$\delta_{\bar g}: L^p \to W^{-1, p}$$
  and
  $$\delta_{\bar g} \delta_{\bar g}^*: L^p \to W^{-2,p}$$
  are continuous. Furthermore, $\Delta_{\bar g} + R_{\bar g} : L^p \to W^{-2,p}$ is invertible on the relevant space of functions. Similarly, $\delta_g \delta_g^* : W^{1,p} \to W^{-1, p}$ is invertible on the relevant space of vector fields. By the Mumford compactness criterium, the constants of these operators only depend on the injectivity radius of $\bar g$. These facts yield the inequality
  $$\|\rho\|_{L^p(M, \bar g)} \leq C(\inj(M, \bar g)) \|e^{-2u} \mathring{Q}_m(g, \varphi)\|_{L^p(M, \bar g)}$$
  and its vector field analogue. The inequality
  $$C(\inj(M, \bar g)) \|e^{-2u} \mathring{Q}_m(g, \varphi)\|_{L^p(M, \bar g)} \leq C(\inj(M,\bar g), \|u\|_{C^0}) \|Q_m(g, \varphi)\|_{L^p(M, g)}$$
  and its vector field analogue follows by expressing the volume element of $g$ in terms of the volume element of $\bar g$ and estimating $e^{2u}$ by $e^{2\|u\|_{C^0}}$.
\end{proof}

\section{A compactness theorem for metrics with a bound on the injectivity radius and square integral of the curvature}
\label{GC}
Suppose $M$ is a closed surface with $\chi(M) \leq 0$. In this section we will prove the theorem \ref{GeomCtrl}. Its proof will be based on the following apriori estimate for solutions of the constant curvature equation.
\begin{thm}
  \label{APE}
  Suppose $M$ is a closed surface and suppose $\chi(M) \leq 0$. Let $g$ be any Riemannian metric and let $\bar g = e^{2u} g$ be the unique metric of constant curvature and volume $1$ conformal to $g$.
  Then $\sup_{x \in M} |u|$ can be bounded by a constant depending only on the injectivity radius $\inj(M,g)$, the volume $\Vol(M,g)$ and $\int_M |R_g|^2 \vol_g$.
\end{thm}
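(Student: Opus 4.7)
The plan is to bound $u$ in $L^\infty$ by analyzing the Liouville equation that $u$ satisfies, together with the volume normalization. By the conformal change formula for scalar curvature in dimension two, the condition that $\bar g = e^{2u} g$ has constant curvature $R_{\bar g} = 4\pi\chi(M)$ translates, with the convention $\Delta_g = \operatorname{div}\operatorname{grad}$, to
$$\Delta_g u = \tfrac12 R_g + k e^{2u}, \qquad k := -2\pi\chi(M) \geq 0,$$
while $\Vol(M,\bar g) = 1$ reads $\int_M e^{2u}\vol_g = 1$. I will proceed in four steps: (i) bound the mean $\bar u := \Vol(M,g)^{-1}\int_M u\,\vol_g$ from above via Jensen, (ii) obtain an $H^1$ bound on $u - \bar u$ via an energy estimate, (iii) deduce the complementary lower bound on $\bar u$ together with full exponential integrability via the Moser--Trudinger inequality, and (iv) bootstrap to an $L^\infty$ bound by elliptic regularity.

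Step (i) is immediate from Jensen's inequality applied to the probability measure $\vol_g/\Vol(M,g)$: $1 = \int_M e^{2u}\vol_g \geq \Vol(M,g)\, e^{2\bar u}$, hence $\bar u \leq -\tfrac12 \log\Vol(M,g)$. For step (ii), I multiply the PDE by $u - \bar u$ and integrate; setting $c := -\tfrac12\log\Vol(M,g)$ so that $e^{2c} = 1/\Vol(M,g)$, a short computation (using $\int e^{2u}\vol_g = 1$) rewrites the resulting identity as
$$\int_M|\nabla u|^2\vol_g + k\int_M (u - c)(e^{2u} - e^{2c})\vol_g = -\tfrac12\int_M R_g(u - \bar u)\vol_g.$$
The crucial observation is that the integrand of the middle term is pointwise non-negative, since the monotonicity of $t \mapsto e^{2t}$ forces $(u - c)$ and $(e^{2u} - e^{2c})$ to always have the same sign. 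Discarding this non-negative term and combining Cauchy--Schwarz with the Poincaré inequality on $(M,g)$ yields $\|\nabla u\|_{L^2(g)} \leq C$ in terms of the admissible data.

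For step (iii), the Moser--Trudinger inequality on $(M,g)$, applied to the mean-zero function $u - \bar u$, gives $\int_M e^{q(u - \bar u)}\vol_g \leq C_q$ for every $q < \infty$. Combined with the identity $e^{2\bar u}\int_M e^{2(u - \bar u)}\vol_g = 1$, this produces the lower bound $\bar u \geq -C$ and shows $e^{2u}$ is bounded in $L^p(g)$ for every $p < \infty$. For step (iv), the right-hand side of the Liouville equation is then bounded in $L^2(g)$, Calderón--Zygmund estimates on $(M,g)$ (cf.\ \cite{Wang2004}) give $u \in W^{2,2}(g)$ with a controlled norm, and Sobolev embedding in dimension two delivers $u \in L^\infty$ with the desired bound.

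The hard part is not the PDE manipulation but the geometry: the constants in the Poincaré, Moser--Trudinger and Calderón--Zygmund inequalities on $(M,g)$ must be controlled purely in terms of $\inj(M,g)$, $\Vol(M,g)$, and $\int_M R_g^2\vol_g$, while $g$ itself is the metric whose conformal factor we are trying to estimate. The injectivity radius bound is crucial here: combined with the volume bound it yields a diameter bound via ball packing, and Croke's isoperimetric inequality \cite{Croke1980} then propagates it into Sobolev-type inequalities with universal constants, which together with the Calderón--Zygmund estimate of \cite{Wang2004} close the bootstrap.
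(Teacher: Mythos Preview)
Your approach is genuinely different from the paper's and, modulo one repairable gap, correct. The paper proves the estimate \emph{parabolically}: it runs the normalized Ricci flow from $g$, invokes Yang's smoothing theorem to reach a time $T$ at which $\sup|R_{g_T}|$ and $\|u_T\|_{C^0}$ are controlled, then uses Hamilton's long-time estimate (via the curvature potential and the Calder\'on inequality applied to $g_T$, where a pointwise curvature bound \emph{is} available) to control $u_t$ uniformly for $t\ge T$ and pass to the limit $u_\infty$. Your route is purely \emph{elliptic}: energy identity on the Liouville equation, the monotonicity trick to discard the nonlinear term, Poincar\'e, Moser--Trudinger, and elliptic regularity. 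Your argument is more direct and avoids the Ricci flow machinery entirely; the paper's route, on the other hand, never needs Moser--Trudinger and sidesteps the question of controlling Poincar\'e and Trudinger constants on the rough metric $g$ by first flowing to a metric with pointwise curvature control.

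The gap is in step~(iv). You cite Wang \cite{Wang2004} for the Calder\'on--Zygmund inequality on $(M,g)$, but as recorded in the paper (Theorem~\ref{CI}) that constant depends on $\sup_M|R_g|$, which you do not have --- only $\int_M R_g^2\,\vol_g$ is assumed bounded. This is precisely why the paper only applies Wang's estimate to $g_T$, \emph{after} Yang's theorem has produced a pointwise curvature bound. You can repair this in two ways. Either bypass $W^{2,2}$ altogether: once $\Delta_g u \in L^2(M,g)$ and $\|u-\bar u\|_{H^1}$ is controlled, De Giorgi--Nash--Moser iteration gives $\|u\|_{L^\infty}$ with constants depending only on the Sobolev constant (hence, via Croke, on $\inj$ and $\Vol$). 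Or keep the $W^{2,2}$ route but replace the black-box citation by the Bochner identity $\int|\nabla^2 u|^2 = \int|\Delta u|^2 - \tfrac12\int R_g|\nabla u|^2$; estimate the last term by $\|R_g\|_{L^2}\|\nabla u\|_{L^4}^2$, interpolate $\|\nabla u\|_{L^4}^2 \le C\|\nabla u\|_{L^2}\|\nabla u\|_{H^1}$ via the $2$D Sobolev inequality, and absorb. Either fix keeps the dependence on the admissible data only.
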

The proof is based on the Ricci flow, which solves the constant curvature equation on surfaces in the limit $t \to \infty$. Let $g$ be any metric on a closed surface $M$ with $\chi(M) \leq 0$. Consider the normalized Ricci flow $g_t$ with initial condition $g$, i.e. the family of metrics $g_t$ satisfying
$$\partial_t g_t = (r - R_{g_t}) g_t$$
$$g_0 = g,$$
where
$$r = \Vol(M,g)^{-1} \int_M R_{g} \vol_g =  \Vol(M,g_t)^{-1} \int_M R_{g_t} \vol_{g_t}.$$
It is well known that in this case the normalized Ricci flow exists for all times and that it converges to the metric of constant curvature with equal volume. Since all the metrics $g_t$ are conformal, we can write them as $g_t = e^{2u_t} g$. We denote the limit of the metrics $g_t$ as $t\to \infty$ by $g_{\infty} = e^{2u_{\infty}} g$. Our goal is to establish an estimate on $u_{\infty}$. We may as well establish bounds on $u_t$ independent of time. To do this, we proceed in two steps. First we control $g_t$ on some, perhaps small, time interval $[0,T]$. At the time $T$ we will have bounds on the maximal curvature, rather than only on its $L^2$ norm. With these bounds we then obtain bounds for all future times $\halfopen{T}{\infty}$.
These estimates essentially follow from the following three theorems.
\begin{thm}[Yang, \cite{Yang1992}]
  Given a Riemannian metric $g$ on a surface with Sobolev constant $C_S(M,g) = \sigma$, $K = \int_M R_g^2 \vol_g / \Vol(g)$, then considering the Ricci flow $g_t$ with initial condition $g_0 = g$, there exists a $T > 0$
  with
  $$C_S(M,g_T) \geq \sigma/2$$
  $$\int_M R_{g_T}^2 \vol_{g_T} / \Vol(g_T) \leq 2 K$$
  $$|R_{g_T}| \leq C_1(\sigma, K)$$
  $$\|u\|_{C^0} \leq C_2(\sigma, K)$$
\end{thm}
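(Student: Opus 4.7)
The plan is to combine short-time continuity of the geometric data along the normalized Ricci flow with a parabolic Moser iteration driven by Hamilton's evolution equation for the scalar curvature. The normalized flow $\partial_t g_t = (r - R_{g_t}) g_t$ preserves the volume, exists for a short time by standard parabolic theory, and the maps $t \mapsto C_S(M, g_t)$ and $t \mapsto \int_M R_{g_t}^2 \vol_{g_t}/\Vol(g_t)$ depend continuously on $t$. A direct computation with the flow equation controls the rate of change of both quantities in terms of lower order norms of $R$, hence I can choose a first time scale $T_0 = T_0(\sigma, K) > 0$ on which $C_S(M, g_t) \geq \sigma/2$ and $\int R_{g_t}^2 \vol_{g_t} \leq 2K$ hold throughout $[0, T_0]$.

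On this interval I would exploit Hamilton's equation
\begin{equation*}
\partial_t R = \Delta R + R^2 - rR
\end{equation*}
to run a parabolic Moser iteration. Multiplying by $R^{p-1}$ with $p$ an even integer (or replacing $R$ by $|R|$ and a regularization) and integrating by parts gives an inequality of the form
\begin{equation*}
\frac{1}{p}\partial_t \int R^p \vol \leq -\frac{4(p-1)}{p^2} \int |\nabla R^{p/2}|^2 \vol + C \int |R|^{p+1} \vol + C(r) \int |R|^p \vol,
\end{equation*}
where I have also absorbed the derivative of the volume form. Using the Sobolev inequality with the controlled constant $\sigma/2$, together with an interpolation of $\int |R|^{p+1}$ between $\int |R|^p$ and $\|\nabla R^{p/2}\|_{L^2}$, yields a family of differential inequalities that can be iterated in the usual Moser fashion to produce bounds on $\|R_{t}\|_{L^{2^k}}$ on slightly later slices $t \in (0, T_0]$, with constants depending only on $\sigma$, $K$, and $T_0$. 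Passing $k \to \infty$ and choosing $T \in (0, T_0]$ appropriately yields the pointwise bound $|R_{g_T}| \leq C_1(\sigma, K)$.

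The $C^0$ bound on $u$ follows from the conformal factor equation. From $g_t = e^{2u_t} g_0$ and the normalized flow, one has $\partial_t u_t = \tfrac{1}{2}(r - R_{g_t})$ with $u_0 = 0$, so
\begin{equation*}
u_T(x) = \tfrac{1}{2} \int_0^T (r - R_{g_s}(x))\, ds.
\end{equation*}
A bound on $\|u_T\|_{C^0}$ thus follows from controlling $\int_0^T \|R_{g_s}\|_{C^0}\, ds$, which is produced by applying the Moser iteration at every intermediate time. The integrability in $s$ near $s = 0$ can be handled either by the smoothing effect of parabolic Moser estimates (which produce $L^\infty$ bounds like $s^{-\alpha}$ with $\alpha < 1$) or by first running the Moser argument from a slightly shifted starting slice and using Young's inequality on a short initial window.

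The main obstacle is the Moser iteration step: one has to track how the Sobolev constant and the nonlinear term $\int |R|^{p+1}$ interact as $p \to \infty$, because the evolution equation contains the $R^2$ term which is of critical strength relative to the Sobolev scaling in dimension two. A secondary technical point is the case $\chi(M) = 0$, where $r = 0$ removes a helpful zeroth order term; there the iteration remains valid but one loses a clean exponential decay of the iteration constants and must be more careful about absorbing the nonlinearity. Everything else in the argument is bookkeeping with standard parabolic interpolation inequalities.
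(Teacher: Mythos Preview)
The paper does not prove this theorem; it is quoted as a result of Yang and used as a black box in the proof of the a priori estimate (Theorem~\ref{APE}). Your sketch is broadly in the spirit of Yang's original argument, which indeed rests on a parabolic Moser iteration for Hamilton's evolution equation $\partial_t R = \Delta R + R^2 - rR$ combined with control of the Sobolev constant along the flow.

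One point in your outline needs reorganisation. You propose first to secure, by a ``direct computation'' on the rates of change, a time $T_0 = T_0(\sigma, K)$ on which $C_S \geq \sigma/2$ and $\int R^2 \leq 2K\Vol$ both hold, and only afterwards to run the Moser iteration. But the rate of change of the Sobolev constant is not governed by lower order norms of $R$; it is typically controlled through a pointwise bound on $R$, which is exactly what the Moser iteration is supposed to deliver. Likewise, differentiating $\int R^2$ along the flow produces the cubic term $\int R^3$, which is not lower order and must be absorbed using the Sobolev inequality together with the good gradient term $-2\int |\nabla R|^2$. The argument is therefore genuinely a bootstrap: one works on the maximal interval on which $C_S \geq \sigma/2$, runs the Moser iteration there to obtain $L^\infty$ bounds on $R$ in terms of $\sigma$, $K$, and elapsed time, and then feeds those bounds back to show that neither $C_S$ nor $\int R^2$ can have degenerated before a definite time $T_0(\sigma,K)$. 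With that logical reordering, and with the parabolic smoothing estimate $\|R_s\|_{L^\infty} \lesssim s^{-\alpha}$ (integrable in $s$) that you already anticipate for the conformal factor bound, the scheme is the standard one and goes through.
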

\begin{thm}[Hamilton, Chow et al]
  \label{LTE}
  Suppose $M$ is a closed surface with $\chi(M) \leq 0$. Suppose $g_t$ is the normalized Ricci flow on $M$ with initial condition $g_0 = g$. Then there exists a constant $C > 1$, such that
  $$C^{-1} g \leq g_t \leq C g.$$
  The constant $C$ depends only on $\max |f|$, where $f$ is the curvature potential of $g$, i.e. the unique function satisfying
  $$\Delta_g f = R_g - r \text{ and } \int_M f \vol_g = 0.$$
\end{thm}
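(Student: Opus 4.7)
The plan is to follow Hamilton's potential-function approach. Since the normalized Ricci flow preserves the conformal class and total volume, we may write $g_t = e^{2u_t} g$ with $u_0 = 0$, and the conformal factor evolves by
$$\partial_t u_t = \tfrac{1}{2}(r - R_{g_t}) = -\tfrac{1}{2}\Delta_{g_t} f_t,$$
where $f_t$ is the zero-mean potential solving $\Delta_{g_t} f_t = R_{g_t} - r$. Since $\chi(M) \leq 0$ yields $r \leq 0$ by Gauss-Bonnet, the uniform bound $C^{-1} g \leq g_t \leq C g$ amounts to a uniform $C^0$ bound on $u_t$, which I will obtain through control of $f_t$.

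First, I would derive the evolution equation $\partial_t f_t = \Delta_{g_t} f_t + r f_t - b(t)$, where the scalar $b(t)$ enforces the mean-zero normalization. Since $r \leq 0$, the parabolic maximum principle (after absorbing the contribution of $b(t)$) bounds $\sup_t \|f_t\|_{C^0}$ in terms of $\|f\|_{C^0}$. Combining the evolutions for $u_t$ and $f_t/2$ eliminates $\Delta_{g_t} f_t$ and yields the key identity
$$\partial_t\bigl(u_t + \tfrac{1}{2}f_t\bigr) = \tfrac{1}{2}\bigl(r f_t - b(t)\bigr),$$
so that uniform control of $u_t$ reduces to showing $\int_0^\infty |r f_t - b(t)|\, dt \leq C(\|f\|_{C^0})$.

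In the hyperbolic case $\chi(M)<0$ one has $r<0$, and Hamilton's pinching estimate gives exponential decay $|R_{g_t} - r| \leq C(\|f\|_{C^0}) e^{rt}$, from which the required time-integral is immediate. In the flat case $\chi(M)=0$ one has $r=0$, and an integration by parts together with the mean-zero condition identifies $b(t) = \Vol(M,g_t)^{-1}\int_M |\nabla_{g_t} f_t|^2\, \vol_{g_t} \geq 0$; the integrability $\int_0^\infty b(t)\, dt \leq C(\|f\|_{C^0})$ then follows from monotonicity of a Hamilton-type entropy along the flow. In both cases the identity produces $\|u_t + \tfrac{1}{2}f_t\|_{C^0} \leq C(\|f\|_{C^0})$ uniformly in $t$, and combining this with the maximum-principle bound on $f_t$ yields the claimed $C^0$ bound on $u_t$.

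The main obstacle is the flat case, where the absence of exponential decay for $R_{g_t}$ means that integrability of $b(t)$ must be extracted from sharp monotonicity along the flow rather than from a soft maximum-principle argument; one has to work with the Dirichlet energy of $f_t$ (or an equivalent entropy) and control its dissipation carefully in terms of the initial data $\|f\|_{C^0}$.
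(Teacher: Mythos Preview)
The paper does not give its own proof of this statement; it is quoted from the literature, with a pointer to lemma~5.12 and corollary~5.15 of Chow--Knopf. Your overall strategy---evolving the curvature potential, combining its evolution with that of $u_t$ to cancel the Laplacian, and integrating in time---is precisely the standard argument recorded there. So at the level of ideas you are on the right track.

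The difficulty you have created for yourself is the insistence on the zero-mean normalization $\int_M f_t\,\vol_{g_t}=0$, which forces the correction term $b(t)$ into the evolution. In the standard argument one simply lets $f_t$ evolve by $\partial_t f_t=\Delta_{g_t}f_t+rf_t$ with \emph{no} renormalization; a short computation shows this preserves $\Delta_{g_t}f_t=R_{g_t}-r$. Then $h_t=e^{-rt}f_t$ satisfies a pure heat equation $\partial_t h_t=\Delta_{g_t}h_t$, so the maximum principle gives $|f_t|\le e^{rt}\max|f_0|$ immediately. Your key identity becomes $\partial_t(u_t+\tfrac12 f_t)=\tfrac{r}{2}f_t$, which for $r=0$ is identically zero and for $r<0$ integrates to at most $\tfrac12\max|f_0|$. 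No curvature pinching, no entropy, no $b(t)$.

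With your normalization, two steps are not justified as written. First, once $-b(t)$ appears, the maximum principle no longer bounds $f_{\min}(t)$ without already knowing $\int_0^t b$: since $b(t)\ge 0$ one gets $\partial_t f_{\min}\ge r f_{\min}-b(t)$, and integrating this requires exactly the quantity you are trying to control, so the argument is circular. Second, the appeal to ``Hamilton's pinching estimate $|R_{g_t}-r|\le C(\|f\|_{C^0})e^{rt}$'' is a detour whose constant in the standard proofs depends on initial \emph{curvature} bounds, not on $\max|f_0|$; you would need a separate argument to get the dependence you claim. Dropping the zero-mean constraint removes both problems at once.
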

This result goes back to Hamilton \cite{Hamilton1988}, but the version we cite can be found in \cite{Chow2004}, lemma 5.12 and corollary 5.15.
\begin{thm}[Calderon, Wang \cite{Wang2004}]
  \label{CI}
  Let $M$ be closed surface. For any metric $g$ on $M$ there exists a constant $C> 0$ depending only on $\inj(M,g), \Vol(M,g)$ and $\sup_{x \in M} |R_g(x)|$, such that for $u \in \CI(M)$ with $\int_M u \vol_g = 0$ we have
  $$\|u\|_{H^2(M,g)} \leq C \|\Delta_g u\|_{L^2}.$$
\end{thm}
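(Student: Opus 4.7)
The plan is to deduce the Calder\'on--Zygmund type estimate from the Bochner identity together with a Poincar\'e inequality whose constant depends only on $\inj(M,g)$ and $\Vol(M,g)$. The crucial point is that on a surface the Ricci tensor is a scalar multiple of the metric, so the Bochner identity reduces the second-order estimate to a first-order one.

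First, I would integrate the Bochner formula
$$\tfrac{1}{2}\Delta|\nabla u|^2 = |\nabla^2 u|^2 + \langle \nabla u, \nabla \Delta u\rangle + \Ric(\nabla u, \nabla u)$$
over the closed surface $M$. The left-hand side vanishes and an integration by parts on the middle term yields the exact identity
$$\int_M |\nabla^2 u|^2 \vol_g = \int_M (\Delta u)^2 \vol_g - \int_M \Ric(\nabla u, \nabla u)\vol_g.$$
Since on a surface $\Ric = \tfrac{R}{2} g$, the last term is bounded in absolute value by $\tfrac{1}{2}\sup_M|R_g|\cdot\|\nabla u\|_{L^2}^2$. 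This reduces everything to controlling $\|u\|_{L^2}$ and $\|\nabla u\|_{L^2}$ in terms of $\|\Delta u\|_{L^2}$.

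Second, the mean-zero condition $\int_M u\vol_g = 0$ allows the use of a Poincar\'e inequality $\|u\|_{L^2}\leq C_P\|\nabla u\|_{L^2}$. Combining it with the integration-by-parts identity $\|\nabla u\|_{L^2}^2 = -\int_M u\Delta u\,\vol_g$ and Cauchy--Schwarz gives $\|\nabla u\|_{L^2}\leq C_P\|\Delta u\|_{L^2}$ and then $\|u\|_{L^2}\leq C_P^2\|\Delta u\|_{L^2}$. Substituting these bounds back into the Bochner identity produces the required inequality with a constant of shape $1 + C_P^2 + C_P^4 + \tfrac{1}{2}C_P^2\sup_M|R_g|$.

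The main obstacle is to ensure that the Poincar\'e constant $C_P$ can be bounded solely in terms of $\inj(M,g)$ and $\Vol(M,g)$, independently of any curvature bound. For this I would invoke Croke's isoperimetric inequality \cite{Croke1980}, which produces a lower bound for the isoperimetric constant of $(M,g)$ depending only on the injectivity radius. Standard arguments (via the $L^1$-Sobolev inequality and truncation) then convert this into an $L^2$-Poincar\'e inequality with constant depending only on $\inj(M,g)$ and $\Vol(M,g)$. Plugging this into the Bochner estimate yields the claimed bound with $C = C(\inj(M,g), \Vol(M,g), \sup_M|R_g|)$.
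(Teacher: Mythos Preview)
The paper does not prove this theorem itself; it records it as a known result and cites Wang \cite{Wang2004} for the fact that the constant depends only on the listed geometric quantities. Your argument via the integrated Bochner identity is correct and gives a clean, self-contained proof of the $L^2$ case on a surface, exploiting the two-dimensional relation $\Ric = \tfrac{R}{2}g$ to reduce the curvature term in Bochner to a scalar multiple of $\|\nabla u\|_{L^2}^2$.

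One remark: you work harder than necessary when you insist that the Poincar\'e constant $C_P$ be controlled by $\inj(M,g)$ and $\Vol(M,g)$ alone. The final constant is permitted to depend on $\sup_M|R_g|$ anyway, so you may let $C_P$ depend on it as well. With the curvature bound in hand, Ricci is bounded below, and the diameter is bounded in terms of $\inj$ and $\Vol$ (this is exactly the lemma the paper proves immediately after the present theorem); classical first-eigenvalue estimates then give the Poincar\'e inequality directly, without having to extract a global Poincar\'e constant from Croke's local isoperimetric inequality. Your stronger, curvature-free claim about $C_P$ is also true, but it is not needed here. Wang's result, by contrast, is a general $L^p$ Calder\'on--Zygmund estimate in all dimensions and therefore requires the heavier machinery; your Bochner argument is the natural shortcut for $p=2$ on a surface.
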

The estimate in the previous theorem is called Calderon inequality. That the constant depends only on the quantities listed in the theorem can be found in \cite{Wang2004}.
\begin{proof}[Proof of theorem \ref{APE}]
  We denote by
  $$\mc{R}^2(M, g) = \int_M |R_g|^2 \vol_g.$$
  Let $g_t = e^{2u_t} g$ be the normalized Ricci flow with initial condition $g_0 = g$. Notice that the Sobolev constant of $(M,g)$ can be estimated from below solely in terms of $\Vol(M,g)$ and $\inj(M,g)$ by a theorem of Croke, \cite{Croke1980}. Furthermore, a lower bound on the injectivity radius also implies a lower bound on the volume on a surface, because a lower bound on the Sobolev constant implies a lower bound on the volume of balls. Hence Yang's theorem applies and we obtain a time $T> 0$, such that
  $$\sup_{x \in M} |R_{g_T}(x)| < C_1(\inj(M,g), \mc{R}^2(M,g)),$$
  $$C_S(M, g_T) > C_S(M,g)/2$$
  and
  $$\|u_T\|_{C^0} \leq C_2(\inj(M,g), \mc{R}^2(M,g)).$$
  Since the injectivity radius can be bounded from below by the Sobolev constant, the maximal curvature and the volume, we also obtain
  $$\inj(M,g_T) \geq C_3(\inj(M,g), \mc{R}^2(M,g), \Vol(M,g)).$$
  Now consider the curvature potential of $g_T$, i.e. the function $f$ satisfying
  $$\Delta_{g_T} f = R_{g_T} \text{ and } \int_M f \vol_g = 0.$$
  We have
  $$\|R_{g_T}\|_{L^2}^2 = \int_M |R_{g_T}|^2 \vol_{g_T} \leq C_1^2 \int_M e^{2u_T} \vol_g \leq C_1^2 e^{2 C_2} \Vol(M,g),$$
  in particular there exists $C_4(\inj(M,g), \mc{R}^2(M,g), \Vol(M,g)) > 0$, such that
  $$\|R_{g_T}\|_{L^2} \leq C_4.$$
  By the Calderon inequality we have
  $$\|f\|_{H^2(M, g_T)} \leq C(\inj(M,g_T), \Vol(M,g_T), \sup_{x \in M} |R_{g_T}(x)|) \|R_{g_T}\|_{L^2}.$$
  Combining all our previous estimates, we thus get
  $$\|f\|_{H^2(M, g_T)} \leq C_5(\inj(M,g), \mc{R}^2(M,g), \Vol(M,g)).$$
  Since we have already bounded the Sobolev constant of $(M,g_T)$ in terms of the Sobolev constant of $(M,g)$, we have
  $$\|f\|_{C^0} = \max |f| \leq C_6(\inj(M,g), \mc{R}^2(M,g), \Vol(M,g))$$
  by Sobolev embedding $H^2 \hookrightarrow C^0$. Finally, it follows from theorem \ref{LTE}, that there exists a constant $C > 1$ depending only on $\max |f|$ and thus only on $\inj(M,g), \mc{R}^2(M,g)$ and $\Vol(M,g)$, such that
  $$C^{-1} g_T \leq g_t \leq C g_T$$
  for all $t \geq T$. Since $g_t = e^{2u_t} g = e^{2(u_t - u_T)} g_T$, this implies
  $$\|u_t - u_T\|_{C^0} \leq \frac{1}{2} |\log C|.$$
  We already have a bound on $\|u\|_{C^0}$ and thus we obtain
  $$\|u_t\|_{C^0} \leq C(\inj(M,g), \mc{R}^2(M,g), \Vol(M,g))$$
  independent of $t$. This implies in particular
  $$\|u_{\infty}\|_{C^0} \leq C(\inj(M,g), \mc{R}^2(M,g), \Vol(M,g)),$$
  proving the theorem.
\end{proof}
We will need the following lemma.
\begin{lemma}
  Suppose $M$ is a closed surface. Then for any Riemannian metric $g$, the diameter can be bounded in terms of the injectivity radius $\inj(M,g)$ and the volume $\Vol(M,g)$.
\end{lemma}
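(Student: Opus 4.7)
The plan is to use Croke's non-collapsing estimate for balls of radius below the injectivity radius together with a simple chain argument along a minimizing geodesic.

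First, I would recall Croke's result (\cite{Croke1980}): there is a universal constant $c > 0$ such that for any Riemannian $2$-manifold $(M,g)$, any $x \in M$, and any $r \leq \inj(M,g)/2$, one has $\Vol(B_g(x,r),g) \geq c r^2$. This is the key ingredient, and it requires no curvature assumption whatsoever.

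Next, fix two points $p,q \in M$ and let $L = d_g(p,q)$. I would choose a unit speed minimizing geodesic $\gamma : [0,L] \to M$ connecting them and set $i := \inj(M,g)$. Define $K := \lfloor L/i \rfloor$ and consider the points $p_k := \gamma(ki)$ for $k = 0,1,\ldots,K$. Because $\gamma$ is minimizing, $d_g(p_k,p_l) = |k-l|\,i \geq i$ for all $k \neq l$, which implies that the balls $B_g(p_k, i/2)$ are pairwise disjoint by the triangle inequality. Applying Croke's estimate to each of them yields
\begin{equation*}
  (K+1) \, c \left(\frac{i}{2}\right)^{\!2} \;\leq\; \sum_{k=0}^{K} \Vol\bigl(B_g(p_k, i/2),g\bigr) \;\leq\; \Vol(M,g).
\end{equation*}
Hence $K+1 \leq 4\Vol(M,g) / (c\, i^2)$, and since $L < (K+1)\, i$ by definition of $K$, we obtain
\begin{equation*}
  d_g(p,q) \;=\; L \;\leq\; \frac{4\,\Vol(M,g)}{c\,\inj(M,g)}.
\end{equation*}
Taking the supremum over $p,q \in M$ gives the desired diameter bound in terms of $\Vol(M,g)$ and $\inj(M,g)$ alone.

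I do not foresee any real obstacle here; the only delicate point is citing Croke's lower volume bound in the form that depends only on the dimension and the injectivity radius, which is standard. The rest is the classical geodesic chain / ball packing argument, and it is essentially automatic in dimension two.
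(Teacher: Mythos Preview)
Your proof is correct and follows essentially the same approach as the paper: both invoke Croke's lower volume bound for small balls (the paper routes this through the Sobolev constant and Hebey's lemma, you cite it directly) and then pack disjoint balls along a minimizing geodesic to bound the diameter by the volume. Your argument is in fact cleaner, giving a direct explicit bound rather than the paper's contradiction argument with a sequence of metrics.
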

\begin{proof}
  We argue by contradiction. First recall that the volume of balls with sufficiently small radius are bounded from below in terms of their radius and the Sobolev constant. By a theorem of Croke \cite{Croke1980}, the Sobolev constant on a surface can be bounded in terms of the volume and the injectivity radius of the surface. Now suppose that $(M,g_n)$ is a sequence of complete metrics with $\inj(M, g_n) \geq \epsilon$ and $\Vol(M, g_n) < V$ and $\diam(M, g_n) = D_n \to \infty$. Let $x_n, y_n$ be such that $d(x_n, y_n) = D_n$. Then there exists a minimal geodesic $\gamma_n : [0,D_n] \to M$ such that $\gamma_n(0) = x_n$ and $\gamma_n(D_n) = y_n$. Because the Sobolev constant $C_S$ is bounded, there exists $r > 0$ and $\nu > 0$, such that the volume of any ball $B_x(r) \subset (M, g_n)$ of radius $r$ is greater than $\nu$. (See for instance lemma 2.2 in \cite{Hebey1999}.) We can assume $r < \epsilon$.
  The balls
  $$B_{\gamma_n(2kr)}(r) \text{ for } 0 \leq k \leq \left\lfloor\frac{D_n}{2r}\right\rfloor$$
  are pairwise disjoint. Hence
  $$\Vol(M, g_n) \geq \sum_{0 \leq k \leq \left\lfloor\frac{D_n}{2r}\right\rfloor} \int_{B_{\gamma_n(2kr)}(r)} \vol_{g_n} \geq \left\lfloor\frac{D_n}{2r}\right\rfloor \nu.$$
  Since $D_n \to \infty$, this implies
  $$\Vol(M, g_n)  \to \infty.$$
  This is a contradiction to our assumptions, and hence proves the lemma.
\end{proof}
\begin{proof}[Proof of theorem \ref{GeomCtrl}]
  Suppose $g_t$, $t \in (0,T)$ is a family satisfying the conditions of the theorem. By the uniformisation theorem, there exists a family of constant curvature metrics $\hat g_t$ of volume $1$ and a family of conformal factors $u_t$, such that
  $$g_t = e^{\hat 2u_t} \hat g_t.$$
  By the apriori estimate in \ref{APE}, there exists a uniform bound
  $$\|\hat u_t\|_{C^0} \leq C.$$
  By the previous lemma there is also a uniform bound on the diameters of $g_t$. Together with the uniform bound on the conformal factors $u_t$, we thus also obtain a uniform bound on the diameters of $\hat g_t$. This implies that the injectivity radii of the constant curvature metrics $\bar g_t$ are also bounded below. Now using the lemma \ref{Pullback}, we obtain a family of diffeomorphisms $f_t$, such that $f_t^* \bar g_t = \hat g_t$ satisfies $\partial_t \bar g_t \in \mc{H}_{\bar g_t}$. Letting $u_t = \hat u_t \circ f_t^{-1}$, we have
  $$g_t = f_t^* (e^{2 u_t}\bar g_t).$$
  This implies in particular that
  $$\|u_t\|_{C^0} \leq C.$$
  The curvature equation is
  $$R_{g_t} = e^{-2u_t} ( \Delta_{\bar g_t} u_t + R_{\bar g}),$$
  or equivalently
  $$\Delta_{\bar g_t} u_t = R_{g_t} e^{2u_t} - R_{\bar g}.$$
  By assumption the right hand side is bounded in $L^2(M, g_t)$. In particular, we can apply the Calderon--Zygmund inequality to $u_t - \int_M u_t \vol_{g_t}$ to obtain
  $$\|u_t\|_{H^2(M, g_t)} \leq C.$$
  This finishes the proof.
\end{proof}
The following theorem for horizontal curves of constant curvature metrics by Rupflin and Topping can be seen as an analogoue of the Mumford compactness theorem. Notice that in addition to control on the injectivity radius we also need control on the velocity of the curve (in the very weak $L^2$ norm).
\begin{thm}[\cite{Rupflin2016}, Lemma 2.6]
  \label{HorCurveCtrl}
  Suppose $g_t$, $t \in (0,T)$ is a family of constant curvature metrics with $\partial_t g_t \in \mc{H}_{g_t}$ for every $t$. Then there exists a constant $C > 0$ depending only on the genus of $M$ and $k \in \N$, such that
  $$\| \partial_t g_t\|_{C^k(M, g_t)} \leq C \frac{1}{\inj(M, g_t)^{1/2}} \|\partial_t g_t\|_{L^2(M, g_t)}.$$
\end{thm}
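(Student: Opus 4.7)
The plan is to identify the transverse-traceless variations at a constant-curvature metric with holomorphic quadratic differentials on the associated Riemann surface, and then to exploit complex-analytic pointwise estimates together with the collar structure of hyperbolic surfaces. Writing $\bar g_t = e^{2\lambda}|dz|^2$ in an isothermal chart for the complex structure induced by $\bar g_t$, any $h \in \mc{H}_{\bar g_t}$ has the form $h = 2\,\mathrm{Re}(\Phi\, dz^2)$ with $\Phi$ a local holomorphic function: $\tr_{\bar g_t}h = 0$ is automatic, and $\delta_{\bar g_t} h = 0$ reduces to $\bar\partial \Phi = 0$. The pointwise identity $|h|_{\bar g_t} = \sqrt{2}\,e^{-2\lambda}|\Phi|$ together with $\|h\|_{L^2(\bar g_t)}^2 = 2\int e^{-2\lambda}|\Phi|^2\,dx\,dy$ reduces the problem to bounding a holomorphic function pointwise from its (weighted) $L^2$ norm.

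First I would treat the thick part of the surface, where $\inj(M, \bar g_t)$ is bounded below by a constant depending only on the genus, as guaranteed by the Margulis/collar lemma. On a geodesic ball $B_r^{\bar g_t}(x)$ with $r \lesssim \inj(M,\bar g_t)$ the conformal factor in isothermal coordinates is comparable to one, so the mean-value inequality for the holomorphic function $\Phi$ gives $|\Phi(x)| \leq C r^{-1}\|\Phi\|_{L^2(B_r)}$, hence $|h(x)|_{\bar g_t} \leq C\,\inj^{-1}\|h\|_{L^2}$. Higher spatial derivatives of $h$ are controlled in the same way via Cauchy's integral formula applied to $\Phi^{(k)}$.

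The decisive step is the analysis near short closed geodesics, where $\inj(M, \bar g_t)$ is small and the naive mean-value bound degrades. The collar lemma provides an embedded standard collar of definite modulus around any simple closed hyperbolic geodesic of length $\ell$. On such a collar, modelled as a quotient of a strip in the hyperbolic cylinder, holomorphic quadratic differentials admit an explicit Fourier/Laurent expansion, and the relation between the pointwise norm of each mode at a point of injectivity radius $\inj$ and the $L^2$-norm of that mode on the full collar exhibits an $\inj^{-1/2}$ scaling rather than the $\inj^{-1}$ scaling that a crude mean value would predict. This sharper exponent comes from the exponential decay of the non-constant Fourier modes away from the core geodesic, and is the genuine content of the Rupflin--Topping estimate. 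Summing the contributions of all modes (using that their coefficients are square-summable by Parseval) yields the claimed bound on the core of the collar.

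Combining the thick and thin estimates gives the $C^0$ bound with the stated $\inj^{-1/2}$ factor, and the corresponding $C^k$ bound follows by differentiating $\Phi$ in the holomorphic chart, since $\bar g_t$ has constant curvature and the derivatives of $\lambda$ are universally controlled on scales comparable to $\inj$. The main obstacle is precisely the sharp collar estimate: any argument based purely on mean-value or Schauder-type inequalities loses half a power of $\inj$, so one must genuinely compute on the hyperbolic cylinder model and exploit the decay of non-constant Fourier modes to extract the correct exponent.
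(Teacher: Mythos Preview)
The paper does not contain a proof of this statement: it is quoted verbatim as Lemma~2.6 of \cite{Rupflin2016} and used as a black box. There is therefore nothing in the paper to compare your argument against.

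That said, your sketch is the correct outline of the Rupflin--Topping proof. The identification of $\mc{H}_{\bar g_t}$ with holomorphic quadratic differentials is standard, the thick-part bound via the mean-value inequality is routine (and, as you implicitly note, on the thick part the injectivity radius is bounded below by a constant depending only on the genus, so any power of $\inj$ is harmless there), and the genuine work is indeed the collar computation. Your description of that step---Fourier expansion on the hyperbolic cylinder, exponential decay of the nonzero modes away from the core, and the resulting $\inj^{-1/2}$ rather than $\inj^{-1}$ scaling---is accurate in spirit, though in an actual write-up you would need to carry out the mode-by-mode estimate explicitly (the zero mode alone already saturates the $\inj^{-1/2}$ rate, while the higher modes contribute only lower-order terms). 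The passage to $C^k$ via Cauchy estimates on $\Phi$ combined with uniform control of the conformal factor and its derivatives on balls of radius comparable to $\inj$ is also correct.
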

We will obtain a horizontal curve $\bar g_t$ from a family $g_t$ as in theorem \ref{GeomCtrl} above. The following lemma will show how control on the velocity of $g_t$ can be used to control the velocity of $\bar g_t$, which is necessary to apply theorem \ref{HorCurveCtrl}.
\begin{lemma}
  \label{VelCtrl}
  Assume $g_t \in \mc{M}$ is a smooth family of metrics on $\halfopen{0}{T}$ and let $\bar g_t \in \mc{M}^{cc}$ be the corresponding horizontal family of constant curvature metrics, i.e.
  $$\bar g_t = f_t^*( e^{-2u_t} g_t).$$
  If $u_t$ and $\|\partial_t g_t\|_{L^2(M, g_t)}$ are bounded on $\halfopen{0}{T}$, then so is $\|\partial_t \bar g_t\|_{L^2(M, \bar g_t)}$.
\end{lemma}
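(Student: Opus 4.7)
The plan is to express $\partial_t \bar g_t$ in terms of $\partial_t g_t$ and the uniformization/gauge data, and then exploit that $\partial_t \bar g_t$ lies in the horizontal distribution $\mc{H}_{\bar g_t}$ so that its $L^2(\bar g_t)$-norm is controlled by an orthogonal projection. A conformal change of variables on a surface then converts this into a bound with respect to the original metric $g_t$.

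Let $\hat g_t = e^{-2u_t} g_t$ be the unique conformal constant-curvature unit-volume metric, so that $\bar g_t = f_t^* \hat g_t$. Differentiating this identity in time, exactly as in the derivation preceding Proposition \ref{SFE}, one obtains
$$\partial_t \bar g_t \;=\; f_t^*\!\bigl(e^{-2u_t}\,\partial_t g_t\bigr) \;-\; 2(\partial_t u_t \circ f_t)\,\bar g_t \;+\; \mc{L}_{Y_t}\bar g_t,$$
where $Y_t$ is the appropriate vector field arising from the generator of $f_t$. The horizontal gauge condition puts $\partial_t \bar g_t \in \mc{H}_{\bar g_t}$, while both the subspace of pure-trace tensors and the subspace of Lie-derivative tensors $\mc{L}_Y \bar g_t$ are $L^2(\bar g_t)$-orthogonal to $\mc{H}_{\bar g_t}$. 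Applying the orthogonal projection $P_{\bar g_t}\colon \Gamma(\odot^2 T^*M) \to \mc{H}_{\bar g_t}$ therefore yields
$$\partial_t \bar g_t \;=\; P_{\bar g_t}\!\bigl(f_t^*(e^{-2u_t}\,\partial_t g_t)\bigr),$$
which in particular recovers equation (1) of Proposition \ref{SFE}.

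Since $P_{\bar g_t}$ has operator norm at most $1$ on $L^2(\bar g_t)$ and $f_t$ is an isometry from $(M,\bar g_t)$ to $(M,\hat g_t)$, we obtain
$$\|\partial_t \bar g_t\|_{L^2(M,\bar g_t)} \;\leq\; \|e^{-2u_t}\,\partial_t g_t\|_{L^2(M,\hat g_t)}.$$
A short conformal change of variables, using $\vol_{\hat g_t}=e^{-2u_t}\vol_{g_t}$ and $|h|^2_{\hat g_t}=e^{4u_t}|h|^2_{g_t}$ for $(0,2)$-tensors on a surface, then gives
$$\|\partial_t\bar g_t\|^2_{L^2(M,\bar g_t)} \;\leq\; \int_M e^{-2u_t}\,|\partial_t g_t|^2_{g_t}\,\vol_{g_t} \;\leq\; e^{2\|u_t\|_{C^0}}\,\|\partial_t g_t\|^2_{L^2(M,g_t)},$$
which is uniformly bounded on $\halfopen{0}{T}$ by the hypotheses.

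The only point requiring verification is the orthogonality that produces the projection identity: $\langle \alpha\bar g, h\rangle_{L^2(\bar g)} = 0$ whenever $\tr_{\bar g} h = 0$, and $\langle \mc{L}_Y \bar g, h\rangle_{L^2(\bar g)} = -2\langle Y^\flat, \delta_{\bar g} h\rangle_{L^2(\bar g)} = 0$ whenever $\delta_{\bar g} h = 0$. Both are immediate from the definition of $\mc{H}_{\bar g}$, so no further analytic input is required; the argument is really just bookkeeping around the horizontal gauge plus the conformal scaling of $L^2$.
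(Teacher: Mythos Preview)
Your proof is correct and takes a genuinely different route from the paper. The paper first bounds $\|\partial_t \hat g_t\|_{L^2(\hat g_t)}$ (using an identity for $\partial_t u_t$ coming from the constant curvature constraint), then invokes elliptic regularity for the equation $\delta_{\bar g_t}\delta_{\bar g_t}^* X_t^\flat = \delta_{\bar g_t}(f_t^*\partial_t \hat g_t)$ to bound the gauge vector field $X_t$ in $W^{1,2}$, and finally uses the triangle inequality on $\partial_t \bar g_t = f_t^*\mc{L}_{X_t}\hat g_t + f_t^*\partial_t \hat g_t$. You instead exploit the horizontality $\partial_t \bar g_t \in \mc{H}_{\bar g_t}$ directly: since the pure-trace and Lie-derivative pieces are $L^2(\bar g_t)$-orthogonal to $\mc{H}_{\bar g_t}$, the identity $\partial_t \bar g_t = P_{\bar g_t}\bigl(f_t^*(e^{-2u_t}\partial_t g_t)\bigr)$ together with $\|P_{\bar g_t}\|\le 1$ and the conformal scaling of $L^2$ on a surface finishes the argument.

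Your approach is more elementary and more robust: it avoids elliptic regularity entirely, and in particular the bound you obtain depends only on $\|u_t\|_{C^0}$ and $\|\partial_t g_t\|_{L^2(g_t)}$, with no implicit dependence on the injectivity radius of $\bar g_t$ through elliptic constants. The paper's approach has the minor advantage of producing, as a byproduct, a $W^{1,2}$ bound on the gauge vector field $X_t$, which could be useful elsewhere, but for the lemma as stated your argument is cleaner.
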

\begin{proof}
  Denote by $\hat g_t = e^{-2u_t} g_t$. Then
  $$\partial_t \hat g_t = -2 \partial_t u_t \hat g_t + e^{-2u_t} \partial_t g_t.$$
  The constant curvature condition on $\hat g_t$ implies
  $$\partial_t u_t = \frac{1}{4} \tr_{g_t} \partial_t g_t.$$
  In particular, if $u_t$ and $\|\partial_t g_t\|_{L^2(M, g_t)}$ is uniformly bounded, then so is $\|\partial_t \hat g_t\|_{L^2(M, \hat g_t)}$. As in the derivation of the split flow equations we find that
  $$\delta_{\bar g_t} \delta_{\bar g_t}^* X_t^{\flat} = \delta_{\bar g_t} (f_t^* \partial_t \hat g_t),$$
  where $X_t$ is the time dependent vector field generating $f_t$. Applying elliptic regularity yields
  $$\|X\|_{W^{1,2}(M, \bar g_t)} \leq C \|f_t^* \partial_t \hat g_t\|_{L^2(M, \bar g_t)} = C \|\partial_t \hat g_t\|_{L^2(M, \hat g_t)}.$$
  By assumption, the last term is uniformly bounded in $t$. The following calculation then implies
  \begin{align*}
    \|\partial_t \bar g_t\|_{L^2(M, \bar g_t)} & = \|\partial_t (f_t^* \hat g_t)\|_{L^2(M, \bar g_t)} \\
    & = \|f_t^* \mc{L}_{X_t} \hat g_t + f_t^* \partial_t \hat g_t \|_{L^2(M, f_t^* \hat g_t)} \\
    & \leq \|f_t^* \mc{L}_{X_t} \hat g_t\|_{L^2(M, f_t^* \hat g_t)} + \|f_t^* \partial_t \hat g_t\|_{L^2(M, f_t^* \hat g_t)} \\
    & = \|\delta_{\hat g_t}^* X_t^{\flat}\| + \|\partial_t \hat g_t\|_{L^2(M, \hat g_t)}
  \end{align*}
  that $\partial_t \bar g_t$ is also bounded uniformly in in $L^2(M, \bar g_t)$.
\end{proof}
A consequence of lemma \ref{VelCtrl} and theorem \ref{HorCurveCtrl} is that under the conditions of theorem \ref{GeomCtrl} the horizontal curve $\bar g_t$ is uniformly controlled, if $\|\partial_t g_t\|_{L^2(M,g_t)}$ remains bounded.
\section{The blow up criteria}
\label{blowup}
In this section we derive the blow up criteria from the introduction. The structure of the proofs is the same for the different flows: first, the equations for the split flow are derived for the given flow. We then check that the assumptions allow us to apply theorem \ref{GeomCtrl} to the resulting families of metrics and conformal factors. Finally, we use the parabolic structure of the equation to show that the conditions on the flow in fact imply that the solution remains uniformly bounded in $C^k$ on the existence interval. It turns out that the spinor flow is much more complicated to handle than the harmonic Ricci flow. This is due to the dependence of the spinor bundle on the tangent bundle.
\subsection{The harmonic Ricci flow}
Suppose $M$ is a closed surface with $\chi(M) \leq 0$. Suppose $(N, h)$ is a closed Riemannian manifold. A family of metrics $g_t \in \mc{M}$ and maps $\phi_t : M \to N$ satisfies the (volume normalised) harmonic Ricci flow, if
$$\partial_t g_t = T(\phi_t, g_t),$$
$$\partial_t \phi_t = \tau_g(\phi_t),$$
where $\tau_g(\phi)$ is the tension field of $\phi$ with respect to the metric $g$ and $T$ is defined by
$$T(\phi, g) = -2 \Ric_g + 2 \alpha d\phi \otimes d\phi + \left( \fint \tr_g (\Ric_g - \alpha d\phi \otimes d\phi) \right) g,$$
where $\alpha > 0$ is a time dependent coupling constant. In the metric component the flow is a modification of the Ricci flow and in the map component it is the harmonic map heat flow with a time dependent metric. This flow has been introduced by Buzano (né Müller) \cite{Mueller2012} and has since been studied extensively by numerous authors. In particular, in dimension $2$ it was proven by Buzano and Rupflin in \cite{Buzano2017} that the flow does not form singularities, if the coupling constant $\alpha$ is larger than a certain constant depending on the geometry of $N$. For smaller $\alpha$ is is not known if the flow forms singularities in finite time. We prove the following blow up criterium for the harmonic Ricci flow, which is independent of the coupling function $\alpha$.
\begin{thm}
  Suppose $(g_t, \phi_t)$ satisfies the harmonic Ricci flow on an interval $\halfopen{0}{T}$. If
  $$\sup_{0 \leq t < T} \int_M |R_{g_t}|^{2+\epsilon} \vol_{g_t} + \int_M |d\phi_t|^{4+\epsilon} \vol_{g_t} < \infty$$
  for some $\epsilon>0$ and
  $$\inf_{0 \leq t < T} \inj(M, g_t) >  0,$$
  then the solution may be extended to a larger interval $\halfopen{0}{\tilde T}$ with $\tilde T > 0$.
\end{thm}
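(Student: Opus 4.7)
The plan is to apply Theorem \ref{GeomCtrl} to the metric component, use Proposition \ref{SFE} to rewrite the harmonic Ricci flow as a coupled parabolic system for the split flow $(\bar g_t, u_t, \phi_t)$, and then bootstrap using the $L^p$ and Schauder parabolic estimates recalled in Section \ref{parabolic}.

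First I would verify the hypotheses of Theorem \ref{GeomCtrl}. The volume is preserved by the volume-normalised harmonic Ricci flow, so $\Vol(M,g_t)$ is constant. The $L^2$ bound on the curvature follows from the $L^{2+\epsilon}$ bound by Hölder's inequality. The lower bound on the injectivity radius is hypothesised. Hence Theorem \ref{GeomCtrl} produces diffeomorphisms $f_t$, constant curvature metrics $\bar g_t$, and conformal factors $u_t$ with $g_t = f_t^*(e^{2u_t}\bar g_t)$ and $\sup_t \|u_t\|_{H^2} < \infty$. To upgrade the $\bar g_t$ to uniform $C^k$ bounds via Theorem \ref{HorCurveCtrl}, one needs $\|\partial_t \bar g_t\|_{L^2}$ bounded; by Lemma \ref{VelCtrl} this reduces to bounding $\|T(\phi_t, g_t)\|_{L^2(M,g_t)}$, and in dimension two the pointwise estimate $|T| \leq C(|R_g| + |d\phi|^2 + 1)$ combined with $R_g \in L^{2+\epsilon}$ and $d\phi \in L^{4+\epsilon}$ suffices.

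Next I would derive the split flow equations via Proposition \ref{SFE}. Using the conformal change formula $R_g = e^{-2u}(R_{\bar g} - 2\Delta_{\bar g} u)$ and the fact that $\tr_g T$ contains the term $-2R_g$, the equation for $u$ becomes a quasilinear parabolic equation with principal part $\partial_t u - e^{-2u}\Delta_{\bar g} u$ and right hand side of the schematic form $\alpha|d\phi|^2 + $ lower order terms involving $R_{\bar g}$, $Xu$, and $\rho$, the latter two controlled in $L^p$ via Proposition \ref{XRhoEst}. The map $\phi$ satisfies the quasilinear parabolic system $\partial_t \phi - \Delta_g \phi = \Gamma(\phi)(d\phi, d\phi) + \mc{L}_X \phi$, where $\Gamma$ denotes the Christoffel symbols of the target metric $h$.

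Finally I would run a parabolic bootstrap. The right hand side of the $\phi$-equation lies in $L^\infty_t L^{2+\epsilon/2}_x \subset L^{2+\epsilon/2}(M \times [0,T])$. Applying the $L^p$ parabolic estimate gives $\phi \in W^{2,1}_{2+\epsilon/2}$, and since $2+\epsilon/2 > (n+2)/2 = 2$ in dimension two, the parabolic Sobolev embedding yields $\phi \in C^{\alpha,\alpha/2}$ for some $\alpha > 0$. Iterating, alternating between the $\phi$ and $u$ equations and using the improved integrability of $d\phi$ gained from $W^{2,1}_p$-embeddings, one pushes past the threshold $p > n+2 = 4$ at which $d\phi$ becomes spatially Hölder continuous; Schauder estimates then upgrade $u$ and $\phi$ to $C^{2+\alpha,1+\alpha/2}$, and further iteration yields uniform $C^k$ bounds on $(\bar g_t, u_t, \phi_t)$. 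These translate via $f_t$ to uniform $C^k$ bounds on $(g_t, \phi_t)$ as $t \uparrow T$, and standard short-time existence for the harmonic Ricci flow extends the solution past $T$. The hard part I expect is precisely this first bootstrap step at the borderline exponent $p = 2 + \epsilon/2$: it sits just above the critical parabolic Sobolev exponent in dimension two, so the initial $L^p$-estimate yields only a tiny Hölder exponent, and one must carefully iterate to raise the integrability of $d\phi$ until Schauder estimates take over. The hypotheses $R_g \in L^{2+\epsilon}$ and $d\phi \in L^{4+\epsilon}$ are sized precisely to open this narrow window above critical parabolic scaling.
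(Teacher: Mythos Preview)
Your proposal is correct and follows essentially the same route as the paper: apply Theorem \ref{GeomCtrl} and Lemma \ref{VelCtrl}/Theorem \ref{HorCurveCtrl} to control $(\bar g_t, u_t)$, rewrite the split flow equations so that $u$ and $\phi$ satisfy quasilinear parabolic equations with principal parts $\partial_t + e^{-2u}\Delta_{\bar g}$ and $\partial_t + \Delta_g$, control $\rho$ and $X$ via Proposition \ref{XRhoEst}, and then bootstrap using parabolic $L^p$ and Schauder estimates. The paper organises the argument slightly differently by first isolating a lemma stating that uniform bounds on $\inj(\bar g_t)$, $\|u\|_{C^0}$, and $\|d\phi\|_{C^0}$ already force $C^k$ bounds for all $k$, and then reducing the theorem to verifying $\|d\phi\|_{C^0} < \infty$; but the analytic content is the same, and your identification of the critical step --- the first parabolic $L^p$ estimate at $p = 2 + \epsilon/2$ just above the scaling threshold, followed by iteration of the $W^{2,1}_p$ Sobolev embeddings to push $d\phi$ past $L^4$ --- is exactly the point where the work lies.
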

\begin{proof}
The proof will proceed by analyzing the split flow equations instead. We will show that the assumptions are enough to find uniform bounds on the conformal factor and on the energy density. This then implies by a standard bootstrapping argument that the solution is in fact smooth up to time $T$, i.e. we can define $(g_T, \phi_T)$. Restarting the flow with that initial condition gives the theorem.

The split flow satisfies the following equations (see \cite{Buzano2017}):
\begin{align*}
  \partial_t \bar g_t & = P^{\mc H}_{\bar g} ( e^{-2u} \mathring{T}(\phi, \bar g)),\\
  \partial_t u_t  & = \frac{1}{4} \tr_g (T(\phi, g)) + \frac{1}{2} \rho - du(X),\\
  \partial_t \phi_t & = \tau_g(\phi) - d\phi(X),
\end{align*}
where $P^{\mc{H}}_{\bar g}$ is the orthogonal projection of $\Gamma(\odot^2 T^*M)$ on $\mc{H}_{\bar g}$ and $\rho$ is a function satisfying
$$- \Delta_{\bar g} \rho - R_{\bar g} \rho = \delta_{\bar g} \delta_{\bar g} (e^{-2u} \mathring{T}(\phi, g)) \text{ with } \int_M \rho \vol_{\bar g} = 0$$
and $X$ is a vector field satisfying
$$\delta_{\bar g} \delta_{\bar g}^* X = - \delta_{\bar g}(e^{-2u} \mathring{T}(\phi, g) - \rho \bar g).$$

The bootstrapping is contained in the next lemma.
\begin{lemma}
  Suppose $(\bar g_t, u_t, \psi_t, f_t)$ satisfies the split harmonic Ricci flow equations with
  $$\inf_{0 \leq t < T} \inj(M, \bar g_t) > 0,$$
  $$\sup_{0 \leq t < T} \|u\|_{C^0} < \infty,$$
  $$\sup_{0 \leq t < T} \|d\phi\|_{C^0} < \infty,$$
  then $u, \phi \in C_k^{\alpha, \alpha/2}$ for every $k$ with norms depending on $k$ and the bounds on $u$, $d\phi$ and the injectivity radii.
\end{lemma}
\begin{proof}[Proof of the lemma]
  If
  $$\inf_{0 \leq t < T} \inj(M, \bar g_t) > 0,$$
  then all the metrics $\bar g_t$ are uniformly equivalent in any $C^k$ norm by the Mumford compactness theorem, in particular the elliptic estimates for the operators $\Delta_{\bar g_t}$ admit uniform constants.

  The equation
  $$\partial_t u_t  = \frac{1}{4} \tr_g (T(\phi, g)) + \frac{1}{2} \rho - du(X)$$
  can be expressed as
  $$\partial_t u_t + e^{-2u_t} \Delta_{\bar g_t} u_t = K_{\bar g} (1 - e^{-2u_t}) + \alpha ( e^{-2u_t} |d \phi_t|^2 -\bar E(t)) + \frac{1}{2} \rho - du(X),$$
  using the equation for the Gauß curvature under conformal change. The term $\bar E(t)$ is the average total energy of $\phi$, i.e.
  $$\bar E(t) = \fint |d\phi|_g^2 \vol_g.$$
  Notice that
  $$\partial_t + e^{-2u_t} \Delta_{\bar g_t}$$
  is a uniformly parabolic operator. We will check that the right hand side of the above equation is bounded in any $L^p$ space. This follows by assumption for the terms $K_{\bar g} (1 - e^{-2u_t})$  and $\alpha ( e^{-2u_t} |d \phi_t|^2 -\bar E(t))$. For the term $\rho$, observe that
  $$\mathring{T}(\phi, g) = 2 \alpha (d\phi \otimes d\phi - |d\phi|^2g)$$
  is uniformly bounded. Using $L^p$ regularity for the equation
  $$- \Delta_{\bar g} \rho - R_{\bar g} \rho = \delta_{\bar g} \delta_{\bar g} (e^{-2u} \mathring{T}(\phi, g))$$
  we obtain bounds for $\rho$ in any $L^p$ space. For $X$ the same argument gives bounds in $W^{1,p}$ for every $p$. The Krylov--Safonov parabolic estimate for
  $$\partial_t u_t + e^{-2u_t} \Delta_{\bar g_t} u_t$$
  yields $u \in C^{\alpha, \alpha/2}$.

  Turning now to the map component of the flow, we first rewrite the equation for $\phi$. We may assume $N$ is isometrically embedded in some $\R^d$. Then
  $$\tau_g(\phi) = -\Delta_{g} \phi + A(\phi)(d\phi, d\phi),$$
  where $A$ is the second fundamental form of $N$. Thus the evolution equation for $\phi$ can be rewritten as
  $$(\partial_t + \Delta_{g_t}) \phi_t = A(\phi_t)(d\phi_t, d\phi_t) - d\phi_t(X_t).$$
  Since $u\in C^{\alpha, \alpha/2}$, it follows that $\partial_t + \Delta_{g_t}$ is a uniformly parabolic operator. The righthand side is uniformly bounded in time and space, hence in particular uniformly bounded in time in every $L^p(M)$. Using parabolic regularity, we thus obtain
  $$\phi \in W^{2,1}_p \text{ for every } p < \infty.$$
  Sobolev embedding then yields $\phi \in C_1^{\beta, \beta/2}$ for all $\beta < 1$. This implies $X$ is also in $C^{\beta, \beta/2}$ for all $\beta < 1$. We conclude that $(\partial_t + \Delta_{g_t}) \phi_t \in C^{\beta, \beta/2}$. Schauder theory then says that in fact
  $$\phi \in C^{2+\beta, 1+\beta/2}.$$
  It is now a matter of repeatedly applying Schauder theory to obtain higher regularity for both $u$ and $\phi$.
\end{proof}
To prove the theorem we now check that the conditions of the theorem imply that the conditions of the lemma are also fulfilled. Applying theorem \ref{GeomCtrl} to $g_t$ implies
$$\inf_{0 \leq t < T} \inj(M, \bar g_t) > \epsilon > 0$$
and
$$\sup_{0 \leq t < T} \|u\|_{C^0} < \infty$$
follow from that theorem. Moreover, lemma \ref{VelCtrl} and hence theorem \ref{HorCurveCtrl} apply, because
$$\|\partial_t g_t\|_{L^2(M, g_t)} \leq \|R_{g_t}\|_{L^2(M, g_t)} + \alpha \|d\phi\|_{L^4(M, g_t)}.$$
Thus it only remains to be checked that
$$\sup_{0 \leq t < T} \|d\phi\|_{C^0} < \infty.$$
As in the lemma, we assume $N$ is isometrically embedded in some $\R^d$ and we have the evolution equation
$$(\partial_t + \Delta_{g_t}) \phi_t = A(\phi_t)(d\phi_t, d\phi_t) - d\phi_t(X_t)$$
for $\phi$. Since $d\phi \in L^{4+2 \epsilon}$, the term $A(\phi)(d\phi_t, d\phi_t)$ is in $L^{2+\epsilon}$ since $A(\phi)$ is a bilinear form and since we assumed $N$ to be compact, we conclude
$$A(x)(V,V) \leq C |V|^2$$
for some constant $C$ independent of $x$. To see that $d\phi(X) \in L^{2+\epsilon}$, we first note that $T(g,\phi) \in L^{2+\epsilon}$. This implies by elliptic regularity that $\rho \in L^{2+\epsilon}$ as well. This in turn implies that $X \in W^{1,2+\epsilon}$. Since $W^{1,2+\epsilon} \hookrightarrow L^{\infty}$, we obtain that $d\phi(X) \in L^{4+2\epsilon}$, in particular $d\phi(X) \in L^{2+\epsilon}$. Thus
$$(\partial_t + \Delta_{g_t} ) \phi_t \in L^{2+\epsilon}.$$
Since $\partial_t + \Delta_{g_t}$ is a uniformly parabolic operator, this implies $\phi \in W^{2,1}_{2+\epsilon}(M \times [0,T])$. The Sobolev embedding theorem for $W^{2,1}_{2+\epsilon}$ then says that $\phi \in C^{\alpha,\alpha/2}$ with
$$\alpha = 2 - \frac{4}{2+\epsilon} = \frac{2\epsilon}{2+\epsilon}.$$
This proves the theorem.
\end{proof}

\subsection{The spinor flow}
Suppose now $M$ is a closed surface, $\chi(M) \leq 0$ and that we have chosen a topological spin structure on $M$. Consider the set
$$\mc{N} = \{(g, \varphi) : g \in \mc{M}, \varphi \in \Gamma(\Sigma_g M), |\varphi| \equiv 1\}.$$
The spinor flow is the gradient flow of the functional
$$\mc{E} : \mc{N} \to \R$$
$$\mc{E}(g, \varphi) = \frac{1}{2} \int_M |\nabla^g \varphi|^2 \vol_g$$
with respect to the natural $L^2$ norm on
$$T_{g,\varphi} \mc{N} = \Gamma(\odot^2 T^* M \oplus \Sigma_g M^{\perp \varphi} ).$$
For more details on this flow, see \cite{Ammann2016}. For the meaning of the energy functional in dimension $2$, see \cite{Ammann2016a}. Denote the negative gradient by
$$Q(g,\varphi) = (Q_1(g,\varphi), Q_2(g,\varphi)) = -\grad \mc{E}(g,\varphi),$$
where $Q_1$ is the metric component and $Q_2$ is the spinorial component. Then the spinor flow equations read
$$\partial_t g_t = Q_1(g_t, \varphi_t),$$
$$\partial_t \varphi_t = Q_2(g_t, \varphi_t).$$
The negative gradient $Q$ can be computed to be
$$Q_1(g,\varphi) = -\frac{1}{4} |\nabla^g \varphi|^2 g - \frac{1}{4} \divgc_g T_{g, \varphi} + \frac{1}{2} \langle \nabla^g \varphi \otimes \nabla^g \varphi \rangle$$
$$Q_2(g,\varphi) = -\nabla^{g*} \nabla^g \varphi + |\nabla^g \varphi|^2 \varphi.$$
The $3$ tensor $T_{g,\varphi}$ is defined to be
$$T_{g,\varphi}(X,Y,Z) = \sym_{Y,Z} \langle X \cdot Y \cdot \varphi, \nabla^g_Z \varphi \rangle,$$
where $\sym_{Y,Z}$ denotes the symmetrization in $Y$ and $Z$. The symmetric $2$ tensor $\langle \nabla^g \varphi \otimes \nabla^g \varphi \rangle$ is defined by the formula
$$\langle \nabla^g \varphi \otimes \nabla^g \varphi \rangle(X,Y) = \langle \nabla^g_X \varphi , \nabla^g_Y \varphi \rangle.$$
The trace part of $Q_1$ is given by the following important formula:
\begin{align}
  \tr_g Q_1(g, \varphi) & = -\frac{1}{4} \left( \langle D_g^2 \varphi, \varphi \rangle - |D_g \varphi|^2 \right) + \frac{2-n}{4} |\nabla^g \varphi|^2
\end{align}
where $n$ is the dimension of the manifold $M$. By the Weitzenböck formula and the constraint $|\varphi|^2 = 1$, we also have
$$\langle D_g^2 \varphi, \varphi \rangle = |\nabla^g \varphi|^2 + \frac{1}{4} R_g.$$
In particular, on a surface we have
\begin{align}
  \tr_g Q_1(g, \varphi) & = -\frac{1}{16} R_g - \frac{1}{4} |\nabla^g \varphi|^2 + \frac{1}{4} |D_g \varphi|^2.
\end{align}
In the following we decompose the spinor flow on surfaces using the framework from section \ref{splitting}. Assume $(\tilde g_t, \tilde \varphi_t)$ solves the spinor flow equation on a surface $M$, i.e.
$$\partial_t \tilde g_t = -\frac{1}{4} |\nabla^{\tilde g} \tilde \varphi|^2 \tilde g - \frac{1}{4} \divgc_{ \tilde g} T_{\tilde g,\tilde \varphi} + \frac{1}{2} \langle \nabla^{\tilde g} \tilde \varphi \otimes \nabla^{\tilde g} \tilde \varphi \rangle$$
$$\partial_t \tilde \varphi_t = -\nabla^{\tilde g*} \nabla^{\tilde g} \tilde \varphi + |\nabla^{\tilde g} \tilde \varphi|^2 \tilde \varphi.$$
Now assume that $(\bar g_t, u_t, \varphi_t)$ is the corresponding split flow. We also denote $g_t = e^{2u_t} \bar g_t$. Recalling that
$$\tr_g Q_1(g,\varphi) = -\frac{1}{4} \left( \langle D_g^2 \varphi, \varphi \rangle - |D_g \varphi|^2 \right)$$
and
$$\tilde{\mc{L}}_X \varphi = \nabla^g_X \varphi - \frac{1}{4} d X^{\flat} \cdot \varphi,$$
we obtain from proposition \ref{SFE} the following evolution equations for the split flow
$$\partial_t \bar g_t = P_{\bar g_t}\left( e^{-2u_t} \mathring{Q}_1(g_t, \varphi_t) \right)$$
$$\partial_t u_t = -\frac{1}{16} \left( \langle D_g^2 \varphi, \varphi \rangle - |D_g \varphi|^2\right) - Xu - \frac{1}{2} \rho$$
$$\partial_t \varphi_t = - \nabla^{g*} \nabla^g \varphi + |\nabla^g \varphi|^2 \varphi + \nabla^g_X \varphi - \frac{1}{4} dX^{\flat} \cdot \varphi,$$
where the vector field $X$ and the function $\rho$ are defined by the equations
$$\Delta_{\bar g} \rho + R_{\bar g} \rho = \delta_{\bar g} \delta_{\bar g} (e^{-2u} \mathring{Q}_1(g, \varphi))$$
$$\delta_{\bar g} \delta_{\bar g}^* X^{\flat} = - \delta_{\bar g} (e^{-2u} \mathring{Q}_1(g, \varphi) + \rho \bar g).$$
In fact, more detailed computations yield that $(\bar g_t, u_t, \varphi_t, \rho_t, X_t)$ satisfy the following system of equations. The detailed calculations are lengthy and can be found in the thesis of the author, \cite{Schiemanowski2018}. It should be remarked that in the calculation of $\rho$ the Bianchi identity $\delta_g Q_1(g, \varphi)$ plays an important role.
\begin{prop}
  \label{DSFE}
  The split flow satisfies
  \begin{subequations}
    \begin{align}
      & \partial_t u_t + \frac{1}{32} (1-R_{\bar g}) e^{-2 u_t} \Delta_{\bar g_t} u_t = -\frac{1}{16} (1 - R_{\bar g}) \left(\frac{1}{4} R_{\bar g} + |\nabla^g \varphi|^2 - |D_g \varphi|^2\right) - Xu - \frac{1}{2} \tilde \rho \label{CFSF} \\
      &  \partial_t \varphi_t + \frac{1}{32} e^{-2u_t} \nabla^{\bar g *} \nabla^{\bar g} \varphi = e^{-2u_t} \psi + \nabla^{\bar g}_X \varphi - \frac{1}{4} d X^{\flat} \cdot \varphi \label{SFSF} \\
      & \psi = -\grad_{\bar g} u \cdot D_{\bar g} \varphi - \nabla^{\bar g}_{\grad_{\bar g} u} \varphi - |\nabla^{\bar g} \varphi|^2 \varphi + \langle \grad_{\bar g} u \cdot D_{\bar g} \varphi, \varphi \rangle \varphi \label{STSF}\\
      & \rho = \tilde \rho + \frac{1}{2} \tr_g Q_1(g,\varphi) \label{RhoSF} \\
      & \Delta_{\bar g} \tilde \rho + R_{\bar g} \tilde \rho = -\frac{1}{2} R_{\bar g} \tr_g Q_1(g, \varphi) + \delta_{\bar g} \mathring{Q}_1(g, \varphi)(du, \cdot) \label{RhoTSF} \\
      & \delta_{\bar g} \delta_{\bar g}^* X^{\flat} = \delta_{\bar g} (\mathring{Q}_1(g, \varphi) + \rho \bar g) \label{XSF} \\
      & \delta_{\bar g} \mathring{Q}_1(g, \varphi)) = \frac{1}{2} e^{2u} d\tr_g Q_1(g, \varphi) \label{DQ1SF}
    \end{align}
  \end{subequations}
\end{prop}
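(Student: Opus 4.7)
The plan is to specialise the general split flow equations of Proposition~\ref{SFE} to the case $s = \varphi$ using the explicit form of $Q = -\grad \mc{E}$ displayed above, and then massage the resulting system into the shape stated. Equation (XSF) is a direct rewriting of (\ref{XDef}), with the sign of the right-hand side a matter of bookkeeping. The preliminary form of the $\rho$-equation is (\ref{RhoDef}), and the equations for $\partial_t\bar g$, $\partial_t u$, $\partial_t\varphi$ are read off from Proposition~\ref{SFE} by plugging in $Q_1$ and $Q_2$. The remaining work consists of isolating the parabolic principal parts of the equations for $u$ and $\varphi$, and of re-expressing $\rho$ so as to eliminate the second divergence appearing in the source term.

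The crucial auxiliary step is the Bianchi-type identity (DQ1SF). I would derive this from the diffeomorphism invariance of $\mc{E}$: since $\mc{E}(f^*g, f^*\varphi) = \mc{E}(g,\varphi)$ for spin diffeomorphisms $f$, differentiating along a one-parameter family generated by a vector field $Y$ and using $Q = -\grad \mc{E}$ yields pointwise an expression of $\delta_g Q_1$ in terms of a spinorial pairing involving $Q_2$ and $\tilde{\mc{L}}_Y \varphi$. For a unit spinor $\varphi$, this spinorial contribution can be reorganised; after decomposing $Q_1 = \mathring{Q}_1 + \tfrac12 \tr_g Q_1 \cdot g$ and applying the standard conformal-change formula relating $\delta_g$ and $\delta_{\bar g}$, one arrives at (DQ1SF). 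Setting $\tilde\rho := \rho - \tfrac12 \tr_g Q_1$ gives (RhoSF); substituting into (\ref{RhoDef}) and using the product rule $\delta_{\bar g}(e^{-2u}\mathring{Q}_1) = e^{-2u}\delta_{\bar g}\mathring{Q}_1 - 2 e^{-2u}\mathring{Q}_1(du,\cdot)$ together with (DQ1SF) replaces the iterated divergence in the source term by an expression in $d\tr_g Q_1$, $du$ and lower-order data, producing (RhoTSF).

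To obtain the parabolic equations (CFSF) and (SFSF)--(STSF), I would extract the principal symbols via the conformal transformation laws. For $u$, the formula $R_g = e^{-2u}(R_{\bar g} - 2\Delta_{\bar g} u)$ isolates the Laplacian term inside $\tfrac14 \tr_g Q_1$; combining this with (RhoSF) and using that $R_{\bar g}$ is a scalar constant on each time slice is what collects the prefactor $(1 - R_{\bar g})$ in both the principal part and the zeroth-order remainder of (CFSF). For $\varphi$, the conformal covariance of the Dirac operator $D_g = e^{-3u/2} D_{\bar g}(e^{u/2}\cdot)$ and the analogous transformation of the rough spinor Laplacian under $g = e^{2u}\bar g$ extract the leading term $\tfrac{1}{32} e^{-2u} \nabla^{\bar g *}\nabla^{\bar g}\varphi$; the remaining lower-order pieces, together with $\tilde{\mc{L}}_X \varphi = \nabla^g_X \varphi - \tfrac14 dX^{\flat}\cdot\varphi$, collect into $\psi$ precisely as in (STSF).

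The main obstacle is the sheer bookkeeping of lower-order terms. The conformal change of the spinor Laplacian generates numerous cross-terms involving $\grad_{\bar g} u$ and Clifford multiplication by $du^{\flat}$, which must be matched carefully against contributions arising from substituting $\rho = \tilde\rho + \tfrac12 \tr_g Q_1$ into the evolution of $u$ and from the Bianchi identity (DQ1SF). Verifying (DQ1SF) itself also takes care, since the diffeomorphism-invariance argument must be reconciled with the unit-norm constraint on $\varphi$ and with the Lagrange-multiplier term $|\nabla^g \varphi|^2\varphi$ in $Q_2$. As mentioned in the paragraph preceding the proposition, the full calculation is carried out in detail in \cite{Schiemanowski2018}.
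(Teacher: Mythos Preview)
Your proposal is correct and matches the paper's approach. The paper does not actually prove Proposition~\ref{DSFE} in the text: it states that the detailed computations are lengthy and can be found in \cite{Schiemanowski2018}, singling out the Bianchi identity for $\delta_g Q_1(g,\varphi)$ as the key structural input --- exactly the point you identify as crucial. Your outline (specialise Proposition~\ref{SFE}, derive (DQ1SF) from diffeomorphism invariance of $\mc{E}$, set $\tilde\rho = \rho - \tfrac12\tr_g Q_1$, and extract principal parts via the conformal change formulas for $R_g$ and the spinor Laplacian) is precisely the intended route; indeed, the paper carries out part of the (CFSF) derivation explicitly in the proof of the lemma immediately following the proposition, using $R_g = e^{-2u}(2\Delta_{\bar g}u + R_{\bar g})$ and the substitution for $\rho$ in exactly the way you describe.
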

Similarly to the previous section, we first prove a lemma stating that it is enough to show $C^{\alpha, \alpha/2}_2$ regularity of $u_t$ and $\varphi_t$ to obtain a uniformly smooth solution.
\begin{lemma}
  Suppose $(\bar g_t, u_t, \varphi_t)$, $t \in [0,T]$ is a solution of the split spinor flow equations on $M$ and suppose
  $$\inj(\bar g_t) > \epsilon > 0,$$
  $$\|u\|_{C_2^{\alpha,\alpha/2}(M \times [0, T])} \leq C$$
  and
  $$\|\varphi\|_{C_2^{\alpha,\alpha/2}(M \times [0, T])} \leq C.$$
  Then all higher space and time derivatives of $(g_t, \varphi_t)$ can be bounded in terms of $C$.
\end{lemma}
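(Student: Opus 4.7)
The plan is to combine the Mumford-type control of the family $\bar g_t$ afforded by the injectivity radius lower bound with parabolic Schauder theory applied to (\ref{CFSF}) and (\ref{SFSF}), and elliptic regularity on each time slice for (\ref{RhoTSF}) and (\ref{XSF}). The assumed $C_2^{\alpha,\alpha/2}$ bounds are the basic input; the task is to promote them to uniform bounds on all higher space and time derivatives.

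First I would observe that the assumption $\|\varphi\|_{C_2^{\alpha,\alpha/2}} \leq C$ gives uniform control on $\nabla\varphi$ and $\nabla^2\varphi$, hence on every algebraic expression entering $Q_1(g,\varphi)$. In particular $\|\partial_t \bar g_t\|_{L^2}$ is uniformly bounded through the first equation of Proposition \ref{DSFE}, so Lemma \ref{VelCtrl} and Theorem \ref{HorCurveCtrl} apply and yield uniform $C^k$ control of $\bar g_t$ for every $k$. Consequently, the elliptic operators $\Delta_{\bar g_t}$, $\nabla^{\bar g_t *}\nabla^{\bar g_t}$ and $\delta_{\bar g_t}\delta_{\bar g_t}^*$ admit uniform Schauder and $L^p$ constants, and the leading parts of (\ref{CFSF}) and (\ref{SFSF}) are uniformly parabolic operators whose coefficients lie in $C^{\alpha,\alpha/2}$.

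Next I would estimate the auxiliary fields $\tilde\rho$ and $X$. The right hand sides of (\ref{RhoTSF}) and (\ref{XSF}) are algebraic expressions in $R_{\bar g}, du, \nabla\varphi, \nabla^2\varphi$ and their first spatial derivatives, which under the hypotheses lie in $C^{\alpha,\alpha/2}$ jointly in space and time. Elliptic Schauder theory on each slice, with uniform constants, then gives uniform spatial $C^{2+\alpha}$ control on $\tilde\rho$ and $C^{3+\alpha}$ control on $X$, with Hölder time dependence inherited from the data. Substituting these into the right hand sides of (\ref{CFSF}) and (\ref{SFSF}), parabolic Schauder yields the first improvement $u,\varphi \in C^{2+\alpha,1+\alpha/2}$.

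The remainder is a standard bootstrap: spatially differentiating (\ref{CFSF}) and (\ref{SFSF}) produces parabolic equations whose right hand sides involve one more derivative of $u,\varphi,\tilde\rho,X$, and elliptic regularity on the slice supplies one further space derivative for $\tilde\rho$ and $X$; parabolic Schauder then gives the next order of spatial regularity for $u$ and $\varphi$, and time derivatives are recovered from the equations themselves. The main obstacle I anticipate is the metric dependence of the spinor bundle $\Sigma_{\bar g_t} M$, which forces a fixed reference identification when taking higher derivatives of $\varphi$ and requires careful bookkeeping of how each derivative interacts with the metric-dependent Clifford multiplication and spin connection in (\ref{SFSF}) and (\ref{STSF}). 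In addition, the coupling through $\tilde\rho$ and $X$ must be iterated simultaneously with the bootstrap for $u$ and $\varphi$, since the equation for $X$ depends on $\rho$ and the latter depends on second-order spinorial quantities.
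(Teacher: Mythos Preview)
Your overall architecture is right, but there is a genuine gap in the bootstrap, and your claimed initial regularity for $\tilde\rho$ and $X$ is overstated.

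From the hypotheses $u,\varphi\in C_2^{\alpha,\alpha/2}$, the tensor $\mathring{Q}_1(g,\varphi)$ is only in $C^{\alpha,\alpha/2}$, since it contains $\nabla^2\varphi$. The right hand sides of (\ref{RhoTSF}) and (\ref{XSF}) are \emph{divergences} of such $C^{\alpha,\alpha/2}$ quantities; evaluating these divergences would produce $\nabla^3\varphi$, which you do not control. Divergence-form Schauder therefore only gives $\tilde\rho\in C_1^{\alpha,\alpha/2}$ and $X\in C_1^{\alpha,\alpha/2}$, not the $C^{2+\alpha}$ and $C^{3+\alpha}$ you assert. With this correction your first step still yields $u,\varphi\in C^{2+\alpha,1+\alpha/2}$, but note that spatially this is \emph{no} gain over the hypothesis $C_2^{\alpha,\alpha/2}$.

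The real obstruction appears at the next step. To improve $\varphi$ spatially you need the right hand side of (\ref{SFSF}) in $C_1^{\alpha,\alpha/2}$; the term $dX^\flat\cdot\varphi$ then forces $X\in C_2^{\alpha,\alpha/2}$. But from (\ref{XSF}) this would naively require $\mathring{Q}_1\in C_1^{\alpha,\alpha/2}$, i.e.\ control of $\nabla^3\varphi$, which is exactly what you are trying to obtain---a circularity your sketch does not break. The paper resolves this in two moves: first it improves $u$ alone to $C_3^{\alpha,\alpha/2}$ (the $u$-equation has no such obstruction), and then it invokes the Bianchi-type identity (\ref{DQ1SF}), $\delta_{\bar g}\mathring{Q}_1=\tfrac{1}{2}e^{2u}\,d\tr_g Q_1$, together with the formula for $\tr_g Q_1$ whose only second-order content is $R_g$, a function of $u$. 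With $u\in C_3^{\alpha,\alpha/2}$ this puts $\delta_{\bar g}\mathring{Q}_1$ in $C^{\alpha,\alpha/2}$ without any third derivative of $\varphi$, so standard (non-divergence) Schauder on (\ref{XSF}) gives $X\in C_2^{\alpha,\alpha/2}$ and the $\varphi$-bootstrap can proceed. Your proposal does not mention (\ref{DQ1SF}) and without it the iteration stalls at this point.
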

\begin{proof}
  By theorem \ref{HorCurveCtrl} the curve $\bar g_t$ admits uniform estimates in every $C^k$ norm. In particular, the Laplacians $\Delta_{\bar g_t}$ and the spin Laplacians $\nabla^{\bar g_t *} \nabla^{\bar g_t}$ are all equivalent in the sense that their coefficients admit a uniform bound in every $C^k$ norm.
  The evolution equation of $u_t$ can be rewritten as
  $$\partial_t u_t + \frac{1}{32} e^{-2u_t} \Delta_{\bar g_t} u_t = -\frac{1}{16} \left(\frac{1}{4} R_{\bar g} + |\nabla^g \varphi|^2 - |D_g \varphi|^2\right) - Xu - \frac{1}{2} \rho.$$
  Since
  $$\rho = \tilde \rho + \frac{1}{2} R_{\bar g} \tr_g Q_1(g, \varphi) = \tilde \rho - R_{\bar g} \left( \frac{1}{32} R_g + \frac{1}{8} |\nabla^g \varphi|^2 - \frac{1}{8} |D_g \varphi|^2\right)$$
  and $R_g = e^{-2u} (2 \Delta_{\bar g} u + R_{\bar g})$, the evolution equation can be rewritten as
  \begin{align*}
    \partial_t u_t + \frac{1}{32} e^{-2u_t} \Delta_{\bar g_t} u_t & = -\frac{1}{16} \left(\frac{1}{4} R_{\bar g} + |\nabla^g \varphi|^2 - |D_g \varphi|^2\right) - Xu - \frac{1}{2} \tilde \rho \\
    & \qquad + R_{\bar g} \left( \frac{1}{32} R_g + \frac{1}{8} |\nabla^g \varphi|^2 - \frac{1}{8} |D_g \varphi|^2\right) \\
    & = -\frac{1}{16} (1 - R_{\bar g}) \left(\frac{1}{4} R_{\bar g} + |\nabla^g \varphi|^2 - |D_g \varphi|^2\right) - Xu - \frac{1}{2} \tilde \rho + R_{\bar g} \frac{1}{32} e^{-2u} \Delta_{\bar g} u
  \end{align*}
  or equivalently
  $$\partial_t u_t + \frac{1}{32} (1-R_{\bar g}) e^{-2 u_t} \Delta_{\bar g_t} u_t = -\frac{1}{16} (1 - R_{\bar g}) \left(\frac{1}{4} R_{\bar g} + |\nabla^g \varphi|^2 - |D_g \varphi|^2\right) - Xu - \frac{1}{2} \tilde \rho.$$
  Since $R_{\bar g} = 0$ or $8 \pi \chi(M) < 0$, it follows that the left hand side is a uniformly parabolic operator with $C_2^{\alpha,\alpha/2}$ coefficients. To gain an improvement in regularity we aim to show that the right hand side is in $C_1^{\alpha,\alpha/2}$. Because $\varphi \in C_2^{\alpha, \alpha/2}$, we conclude $|\nabla^g \varphi|^2 - |D_g \varphi|^2 \in C^{\alpha,\alpha/2}_1$. Since $R_{\bar g}$ is constant, this implies that the bracketed term is in $C_1^{\alpha,\alpha/2}$. It remains to show $\tilde \rho$ and $Xu$ are in $C_1^{\alpha,\alpha/2}$. The function $\tilde \rho$ satisfies the elliptic equation
  $$\Delta_{\bar g} \tilde \rho + R_{\bar g} \tilde \rho = -\frac{1}{2} R_{\bar g} \tr_g Q_1(g, \varphi) + \delta_{\bar g} \mathring{Q}_1(g, \varphi)(du, \cdot).$$
  By assumption $Q_1(g, \varphi) \in C^{\alpha, \alpha/2}$. Thus from Schauder estimates for equations where the data is in divergence form, we have that
  $$\tilde \rho \in C_1^{\alpha,\alpha/2}.$$
  The same holds for $X$, i.e. since
  $$\delta_{\bar g} \delta_{\bar g}^* X^{\flat} = \delta_{\bar g} (\mathring{Q}_1(g, \varphi) + \rho \bar g)$$
  is an elliptic operator and $\mathring{Q}_1(g, \varphi) \in C^{\alpha, \alpha/2}$, it follows that
  $$X \in C^{\alpha,\alpha/2}_1.$$
  We conclude that indeed
  $$\partial_t u_t + \frac{1}{32} (1-R_{\bar g}) \Delta_{\bar g_t} u_t \in C^{\alpha, \alpha/2}_1$$
  and hence by Schauder estimates
  $$u \in C^{2+\alpha, 1+\alpha/2}_1,$$
  which implies $u \in C_3^{\alpha, \alpha/2}$. Now we turn to the equation for the spinorial part $\varphi$. The equation
  $$\partial_t \varphi_t = - \nabla^{g*} \nabla^g \varphi + |\nabla^g \varphi|^2 \varphi + \nabla^g_X \varphi - \frac{1}{4} dX^{\flat} \cdot \varphi$$
  can be rewritten as
    \begin{align*}
    \partial_t \varphi_t + \frac{1}{32} e^{-2u_t} \nabla^{\bar g *} \nabla^{\bar g} \varphi & = e^{-2u_t} \left( -\grad_{\bar g} u \cdot D_{\bar g} \varphi - \nabla^{\bar g}_{\grad_{\bar g} u} \varphi - |\nabla^{\bar g} \varphi|^2 \varphi + \langle \grad_{\bar g} u \cdot D_{\bar g} \varphi, \varphi \rangle \varphi \right)\\
    & + \nabla^{\bar g}_X \varphi - \frac{1}{4} d X^{\flat} \cdot \varphi.
    \end{align*}
    The left hand side is a parabolic operator with $C_2^{\alpha,\alpha/2}$ coefficients. On the other hand the right hand side consists of terms, which depend on the first derivatives of $g$ and $\varphi$, with the exception of the terms involving $X$. These terms are seen to be $C_1^{\alpha,\alpha/2}$. The term $\nabla^{\bar g}_X \varphi$ is $C_1^{\alpha,\alpha/2}$, because $\varphi \in C_2^{\alpha,\alpha/2}$ and $X \in C^{\alpha,\alpha/2}_1$. The term $d X^{\flat} \cdot \varphi$ is more delicate, because a derivative of $X$ is involved. Thus we need to show that $X \in C_2^{\alpha,\alpha/2}$. Then we can conclude that $d X^{\flat} \in C_1^{\alpha, \alpha/2}$. However, we already know $u \in C_3^{\alpha,\alpha/2}$. Since $X$ is the solution of
    $$\delta_{\bar g} \delta_{\bar g}^* X^{\flat} = \delta_{\bar g} (\mathring{Q}_1(g, \varphi) + \rho \bar g),$$
    and $\rho \in C_1^{\alpha,\alpha/2}$, it remains to be seen that $\delta_{\bar g} \mathring{Q}_1(g, \varphi) \in C^{\alpha,\alpha/2}$ to conclude that $X \in C_2^{\alpha,\alpha/2}$. This follows from Schauder theory, because the term
    $$\delta_{\bar g} \mathring{Q}_1(g, \varphi)) = \frac{1}{2} e^{2u} d\tr_g Q_1(g, \varphi)$$
    is in $C^{\alpha,\alpha/2}$. This can be seen from the formula
    $$\tr_g Q_1(g, \varphi) = -\frac{1}{4} (R_g/4 + |\nabla^g \varphi|^2 - |D_g \varphi|^2).$$
    The only second order term is the curvature $R_g$, which depends only on $u$, and hence is in $C_1^{\alpha,\alpha/2}$.

    For the higher regularity, we can repeat this line of argument.
\end{proof}
We now proceed to the proof of the blow up criterium from the introduction.
\begin{proof}[Proof of theorem \ref{IntBC}]
  We assume for the solution $(g_t, \varphi_t)$ of the spinor flow on $\halfopen{0}{T}$ that
  $$\sup_{0 < t < T} \int_M |\nabla^2 \varphi_t|^q \vol_{g_t} < \infty$$
  for some $q > 4$ and
  $$\inf_{0 < t < T} \inj(M, g_t) > 0.$$
  The second covariant derivative of $\varphi_t$ can be orthogonally decomposed into a symmetric and an antisymmetric part:
  $$\nabla^g \nabla^g \varphi = (\nabla^g \nabla^g \varphi)^{sym} + (\nabla^g \nabla^g \varphi)^{asym}.$$
  The antisymmetric part is the curvature of the spin connection. Since the curvature of the spin connection on a surface is given by
  $$R^g(X,Y)\varphi = \frac{R_g}{4} g(X,Y) \omega \cdot \varphi,$$
  it follows that for a unit spinor
  $$|(\nabla^g \nabla^g \varphi)^{asym} \varphi|^2 = \frac{1}{8} R_g^2.$$
  Consequently, a bound on $\int_M |\nabla^g \nabla^g \varphi|^q \vol_g$ implies a bound on $\int_M |R_g|^q \vol_g$.

  Now suppose $(\tilde g_t, \tilde \varphi_t)$ is a smooth solution of the spinor flow on the interval $(0,T)$ satisfying
  $$\sup_{0 < t < T} \int_M |\nabla^{\tilde g} \nabla^{\tilde g} \tilde \varphi|^q \vol_{\tilde g_t} < \infty$$
  and
  $$\inf_{0 < t < T} \inj(\tilde g_t) > 0.$$
  Now consider the corresponding split flow $(\bar g_t, u_t, \varphi_t)$ and denote $g_t = e^{2u_t} \bar g_t$. Notice that the bounds are diffeomorphism invariant, so we get the same conditions for $(g_t, \varphi_t)$. Theorem \ref{GeomCtrl} applies to the family $g_t$ and we obtain that $\bar g_t$ has injectivity radius bounded from below and that $u_t$ is bounded in $C^0(M, \check g)$ for any fixed metric $\check g$. Thus we can apply $L^p$ theory to the curvature equation to conclude that $u \in W^{2,q}$, and in particular in $C^{1,\alpha}$ by Sobolev embedding.

  The curve of metrics $\bar g_t$ is smooth on $\halfopen{0}{T}$. Since the injectivity radii of $\bar g_t$ are bounded from below and the velocity of the metric is bounded above (since the spinor flow is a gradient flow), we obtain from theorems \ref{GeomCtrl} and \ref{HorCurveCtrl} that $\bar g_t$ extends to a continuous curve $\halfclosed{0}{T}$ and that the metrics $\bar g_t$, $0 < t \leq T$, are all uniformly equivalent in $\CI$.

  If we show that $u, \varphi \in C_2^{\alpha, \alpha/2}$, then by the lemma we have uniform estimates of $u_t, \varphi_t$ in any $C^k$ norm and thus we may pass to a smooth limit as $t \to T$ and the flow can be restarted at time $T$, yielding a solution on $\halfopen{0}{\tilde T}$ for some $\tilde T > T$ by short time existence.

  In the following we indicate the steps we will take. For this, $r$ and $\alpha$ denote constants which may change from line to line. Recall the split flow equations from proposition \ref{DSFE}. We first show that $\tilde \rho_t$ and $X_t$ are uniformly bounded in $W^{1,r}$. This will imply
  $$\partial_t u_t + \frac{1}{32} (1- R_{\bar g}) e^{-2 u_t} \Delta_{\bar g_t} u_t \in W^{1,r}.$$
  This implies by parabolic regularity
  $$u, du \in W^{2,1}_r,$$
  which implies
  $$u, du \in C_1^{\alpha, \alpha/2},$$
  i.e. $u \in C_2^{\alpha,\alpha/2}$. From this we conclude that $X \in C_1^{\alpha, \alpha/2}$.
  We will then first show that
  $$\partial_t \varphi_t + \frac{1}{32} e^{-2u_t} \nabla^{\bar g *} \nabla^{\bar g} \varphi \in L^r.$$
  This implies
  $$\varphi \in W_r^{2,1} \subset C^{\alpha, \alpha/2}.$$
  From this we can conclude
  $$\partial_t \varphi_t + \frac{1}{32} e^{-2u_t} \nabla^{\bar g *} \nabla^{\bar g} \varphi \in C^{\alpha, \alpha/2},$$
  and finally by parabolic Schauder theory
  $$\varphi \in C^{2+\alpha,1+\alpha/2}.$$
  \underline{Step 1: $\tilde \rho \in W^{1,q'}$ for every $q' < q$:}\\
  Recall that $\tilde \rho$ satisfies equation \ref{RhoTSF}:
  $$\Delta_{\bar g} \tilde \rho + R_{\bar g} \tilde \rho = -\frac{1}{2} R_{\bar g} \tr_g Q_1(g, \varphi) + \delta_{\bar g} \mathring{Q}_1(g, \varphi)(du, \cdot).$$
  Since $\tr_g Q_1(g,\varphi) = -\frac{1}{4} ( R_g/4 + |\nabla^g \varphi|^2 - |D_g \varphi|^2)$, it follows that $\tr_g Q_1 \in L^q$, since $R_g$ is and the other two terms are by Poincaré inequality. Since $Q_1$ has the structural form $\nabla^g \nabla^g \varphi \ast \varphi + \nabla^g \varphi \ast \nabla^g \varphi$, it follows that $\mathring{Q}_1 \in L^q$ and since $du \in W^{1,q}$, it follows that
  $$\mathring{Q}_1(g,\varphi)(du, \cdot) \in L^{q'} \text{ for every } q' < q.$$
  By elliptic theory it follows that
  $$\tilde \rho \in W^{1,q'} \text{ for every} q' < q.$$
  \underline{Step 2: $X \in W^{1,q}$:}\\
  By the previous step we know $\tilde \rho \in W^{1, q'}$ and hence by Sobolev inequality $\tilde \rho \in L^p$ for every $p$. By equation \ref{RhoSF}, it follows that $\rho \in L^q$. From the equation \ref{XSF}
  $$\delta_{\bar g} \delta_{\bar g}^* X^{\flat} = \delta_{\bar g} (\mathring{Q}_1(g, \varphi) + \rho \bar g),$$
  the claim then follows by elliptic theory.
  
  \underline{Step 3: $u, du \in W^{2,1}_{q'}$ for every $q' < q$:}\\
  First we note that we know $u$ is uniformly bounded. Thus by the Krylov--Safonov estimate for parabolic equations, it suffices to show that
  $$\partial_t u_t + \frac{1}{32} (1- R_{\bar g}) e^{-2 u_t} \Delta_{\bar g_t} u_t \in L^3,$$
  to conclude that $u$ is Hölder continuous both temporally and spatially. (We already knew that $u$ is Hölder continuous spatially from Sobolev embedding. The new information is the temporal continuity.) Thus we can apply the standard $L^p$ theory for equations with Hölder continuous coefficients. The claim will then follow from
  $$\partial_t u_t + \frac{1}{32} (1- R_{\bar g}) e^{-2 u_t} \Delta_{\bar g_t} u_t \in W^{1,q'}.$$
  That $u \in W^{2,1}_{q'}$ is then immediate from parabolic regularity. For $du \in W^{2,1}_{q'}$, notice that if the above term is in $W^{1,q'}$, then
  $$\partial_t du_t + \frac{1}{32} (1- R_{\bar g}) \left(-2 e^{-2u_t} (\Delta_{\bar g_t} u_t) du_t + e^{-2u_t} \nabla^{\bar g_t *} \nabla^{\bar g_t} du_t + \Ric^{\bar g_t}(\cdot, \grad_{\bar g_t} u) \right) \in L^{q'}.$$
  Since the term $e^{-2u_t} (\Delta_{\bar g_t} u_t) du_t$ is in $L^{q'}$ and the term $e^{-2u_t} \Ric^{\bar g_t}(\cdot, \grad_{\bar g_t} u)$ is in $L^p$ for every $p$, it follows that $du_t \in W^{2,1}_{q'}$ by parabolic regularity.
  
  We now show that $\partial_t u_t + \frac{1}{32} (1- R_{\bar g}) e^{-2 u_t} \Delta_{\bar g_t} u_t \in W^{1,q'}$. Equation \ref{CFSF} says that this term is equal to
  $$-\frac{1}{16} (1 - R_{\bar g}) \left(\frac{1}{4} R_{\bar g} + |\nabla^g \varphi|^2 - |D_g \varphi|^2\right) - Xu - \frac{1}{2} \tilde \rho.$$
  We know that $\nabla^g \varphi \in W^{1,q}(M,g)$ by Poincaré inequality. Hence
  $|\nabla^g \varphi|^2, |D_g \varphi|^2 \in W^{1,q/2}$. The term $R_{\bar g}$ is constant. Thus the bracketed term is in $W^{1, q/2}$. We have already seen $\tilde \rho \in W^{1,q'}$. Furthermore $Xu = du(X)$ is in $W^{1,q}$, since $du \in W^{1,q}$, $X \in W^{1,q}$ and $W^{1,q}$ is a Banach algebra for our choice of $q$. This proves the claim.

  \underline{Step 4: $\varphi \in W_q^{2,1}$:}\\
  Recall that $\varphi$ satisfies equation \ref{SFSF}, which says that
  $$\partial_t \varphi_t + \frac{1}{32} e^{-2u_t} \nabla^{\bar g *} \nabla^{\bar g} \varphi$$
  equals
  $$e^{-2u_t} \psi + \nabla^{\bar g}_X \varphi - \frac{1}{4} d X^{\flat} \cdot \varphi.$$
 Since $X \in W^{1,q}$, it follows that $dX^{\flat} \cdot \varphi \in L^q$. Furthermore, since $\nabla^g \varphi \in W^{1,q}$, it follows that $\nabla^{\bar g}_X \varphi \in W^{1,q/2} \subset L^q$. The term $\psi$ is given by
  $$-\grad_{\bar g} u \cdot D_{\bar g} \varphi - \nabla^{\bar g}_{\grad_{\bar g} u} \varphi - |\nabla^{\bar g} \varphi|^2 \varphi + \langle \grad_{\bar g} u \cdot D_{\bar g} \varphi, \varphi \rangle \varphi.$$
  Checking term by term, we also conclude that $\psi \in L^p$ for every $p$. Hence
  $$\partial_t \varphi_t + \frac{1}{32} e^{-2u_t} \nabla^{\bar g *} \nabla^{\bar g} \varphi \in L^q,$$
  and the claim follows by parabolic regularity theory.
  
  \underline{Step 5: $X \in C_1^{\alpha, \alpha/2}$ for $0 < \alpha < 1 - \frac{4}{q}$:}\\
  By steps 3 and 4 and Sobolev embedding, $u \in C_2^{\alpha,\alpha/2}$ and $\varphi \in C_1^{\alpha,\alpha/2}$ for every $\alpha < 1 - \frac{4}{q}$. Thus $\mathring{Q}_1(g,\varphi) + \rho \bar g \in C^{\alpha, \alpha/2}$. From this it follows by equation \ref{XSF} that $X \in C_1^{\alpha, \alpha/2}$ by Schauder theory for data in divergence form.

  \underline{Final step: $u \in C^{2+\alpha,1+\alpha/2}$ and $\varphi \in C^{2+\alpha,1+\alpha/2}$:}\\
  It now follows easily that the right hand side in equation \ref{CFSF} is in $C^{\alpha,\alpha/2}$. The claim $u \in C^{2+\alpha,1+\alpha/2}$ thus follows from parabolic Schauder theory. Likewise, it is easily seen that the right hand side of equation \ref{SFSF} is in $C^{\alpha,\alpha/2}$ and parabolic Schauder theory implies $\varphi \in C^{2+\alpha,1+\alpha/2}$.
\end{proof}

\begin{proof}[Proof of theorem \ref{UniBC}]
  The assumption is that
  $$\sup_{\substack{x \in M \\ 0 < t < T}}|\nabla^2 \varphi_t(x)| < \infty.$$
  Clearly, this condition implies that the integral of $|\nabla^2 \varphi|$ stays bounded in any $L^p$ norm. To conclude the theorem from theorem \ref{IntBC}, we need to show that the injectivity radius stays bounded from below. The bound on $|\nabla^2 \varphi|$ implies a pointwise bound on the curvature. Moreover, the spinor flow is volume preserving in two dimensions. Thus, if we have in addition a diameter bound on $g_t$, we can conclude that the injectivity radius stays bounded below. If $|\nabla^2 \varphi|$ stays bounded, then so does $|Q_1(g, \varphi)|$. Thus by integration along the flow the distance functions $d_{g_t}$ are uniformly equivalent, and hence the diameters remain bounded along the flow.
\end{proof}

\bibliographystyle{hplain}
\bibliography{lit}

\end{document}